\theoremstyle{plain}
\newtheorem{theorem}{Theorem}[section]
\newtheorem{proposition}[theorem]{Proposition}
\newtheorem{lemma}[theorem]{Lemma}
\newtheorem{corollary}[theorem]{Corollary}
\theoremstyle{definition}
\newtheorem{definition}[theorem]{Definition}
\newtheorem{example}[theorem]{Example}
\newtheorem{remark}[theorem]{Remark}
\newtheorem*{con}{Convention}
\DeclareMathOperator{\colim}{colim}
\DeclareMathOperator{\wayb}{{\rotatebox[origin=c]{-90}{$\twoheadrightarrow$}}}
\newcommand{\oto}{{\longrightarrow\hspace*{-3.1ex}{\circ}\hspace*{1.7ex}}}  
\title{The bounded ideal monad on the category of quasi-metric spaces and its algebras}
\author{
	Kai Wang$^1$,  Dexue Zhang$^2$ \\
	\small  $^1${\thinspace}School of Mathematics and Information Sciences, Yantai University, Yantai, China \\
	\small   $^2${\thinspace}School of Mathematics, Sichuan University, Chengdu, China \\
	\small Email: wangkai5111@126.com (K. Wang), dxzhang@scu.edu.cn (D. Zhang)
}
\date{}  
\begin{document} 
	\maketitle
	
	\begin{abstract}
		The notion of bounded ideals is introduced for quasi-metric spaces. Such ideals give rise to a monad, the bounded ideal monad, on the category of quasi-metric spaces and non-expansive maps. Algebras of this monad are metric version of local dcpos of Mislove. It is shown that an algebra of the bounded ideal monad is a standard quasi-metric space of which the formal balls form a local dcpo; and that a continuous algebra is a standard quasi-metric space of which the formal balls form a local domain.

		{\bf Keywords}:  Quasi-metric space; enriched category; formal ball;  bounded ideal; monad; local dcpo 
		
		{\bf MSC (2020)}: 18D99
		
	\end{abstract}

	\section{Introduction} 
	From the viewpoint of category theory, quasi-metric spaces are categories enriched over the closed category $([0,\infty],\geq,+,0)$. In his influential paper \cite{Lawvere1973}, Lawvere  stressed the importance of this fact and observed that Cauchy completeness of a metric space (i.e., every Cauchy sequence converges)  can be formulated as a categorical property (i.e., every Cauchy weight is representable). This is an inspiring example of that a property of quasi-metric spaces may have two facets: a domain theoretic one (in terms of topology and convergence) and a category theoretic one. Besides Cauchy completeness, nice examples in this regard include Yoneda completeness and Smyth completeness:   
	\begin{itemize}
		\item (Flagg,   S\"{u}nderhauf and  Wagner \cite[Lemma 46]{Flagg1996}) A quasi-metric space is Yoneda complete in the sense that every forward Cauchy net has a Yoneda limit if, and only if, all of its ideals (which are sort of metric version of ind-objects of an \emph{ordinary} category) have a colimit. 
		\item (Li and Zhang \cite[Proposition 6.3]{LiZ2018a}) A quasi-metric space is Smyth complete in the sense that every forward Cauchy net converges in the open ball topology of its symmetrization \cite{Sunderhauf1995,KS2002}  if, and only if, all of its ideals are representable.
	\end{itemize}
	
	A key feature of quasi-metric spaces is that, as summarized  in the motto of \cite{Goubault2017}, ``Formal balls are the essence of quasi-metric spaces''. That means, roughly, that properties of quasi-metric spaces are reflected by the order theoretic structure of their formal balls. Here are some examples:  \begin{itemize}
		\item (Kostanek and Waszkiewicz \cite[Theorem 7.1]{Kostanek2010}) A quasi-metric space $(X,d)$ is Yoneda complete if and only if its set of formal balls is directed complete. 
		\item (Romaguera and Valero \cite[Theorem 3.2]{Romaguera-V2010}) A quasi-metric space $(X,d)$ is Smyth complete if and only if its set of formal balls is a continuous dcpo with way below relation given by $(x,r)\ll(y,s)\Leftrightarrow r>s+d(x,y)$.
		\item (Goubault-Larrecq and Ng \cite[Theorem 3.7]{Goubault2017}) A quasi-metric space $(X,d)$ is Yoneda complete and continuous in the sense of \cite{Kostanek2010,Waszkiewicz2009} if and only if its set of formal balls is a continuous dcpo.
	\end{itemize} 
	
	It is known that assigning to each partially ordered set the ordered set of its ideals (= directed lower sets) defines a monad on the category {\sf Poset} of partially ordered sets and order-preserving maps, of which the category of algebras is directed complete partially ordered sets (dcpos for short) and Scott continuous maps (i.e., maps preserving directed joins). It is not hard to check that bounded ideals (i.e., ideals with an upper bound) also give rise to a monad on the category {\sf Poset}, of which the category of algebras is the local dcpos of Mislove \cite{Mislove1999} and Scott continuous maps. 
	
	Yoneda complete quasi-metric spaces are metric version of dcpos, they are the algebras of the ideal monad on the category {\sf qMet} of quasi-metric spaces and non-expansive maps. In this paper we introduce the notion of bounded ideals for quasi-metric spaces. Such ideals give rise to a monad -- the bounded ideal monad -- on the category {\sf qMet}. Algebras of the bounded ideal monad are metric version of local dcpos. Characterizations of such algebras via their formal balls are presented,  echoing the motto of \cite{Goubault2017}. First, a quasi-metric space  is an algebra of the bounded ideal monad if and only if it is standard in the sense of Goubault-Larrecq and Ng \cite{Goubault2017} and its set of formal balls is a local dcpo, hence local Yoneda complete in the sense of Lu and Zhao \cite{Lu2023}. Second,  a quasi-metric space  is a continuous algebra of the bounded ideal monad if and only if it is standard and its set of formal balls is a local domain. 
	
	\section{Yoneda limits and formal balls} 
	
	In this section we recall some basic notions about quasi-metric spaces, the only new result is an equivalent reformulation of the notion of standard quasi-metric spaces introduced in \cite{Goubault2017}.  Our references to quasi-metric spaces are \cite{Bon-Rutten1997} and \cite{Goubault2013}, to order theory are  \cite{Gierz-Hofmann-Lawson2003} and \cite{Goubault2013}.
	
	A quasi-metric space is a pair $(X,d)$ consisting of a set $X$ and a map $d\colon X\times X\longrightarrow[0,\infty]$  such that for all $x,y,z\in X$: \begin{itemize}  \item $d(x,x)=0$; \item $d(x,y)+d(y,z)\geq d(x,z)$; \item   $d(x,y)=d(y,x)=0\Rightarrow x=y$.    \end{itemize}   
	
	A classical metric is just a symmetric (i.e., $d(x,y)=d(y,x)$) and finitary (i.e., $d(x,y)<\infty$) quasi-metric.
	
	The \emph{underlying order} (or \emph{specialization order}) $\sqsubseteq$ of a quasi-metric space $(X,d)$ refers to  the order relation on $X$ defined by  $x\sqsubseteq y$ if $d(x,y)=0.$  
	
	\begin{con}
		We reserve the symbols $\leq,<,\geq,>$ and $\inf,\sup$ for the usual order  of real numbers, and use $\sqsubseteq$ to denote other order relations. An exception  is that the order relation of the index set  of a net will also be denoted by $\leq$, for example, we'll write $\inf_{i\in D} \sup_{k\geq j\geq i}d(x_j,x_k)=0$.
	\end{con}  
	
	\begin{example} \label{internal hom}
		For all $a,b\in [0,\infty]$, let
		$$d_L(a,b)=b\ominus a, \quad  d_R(a,b)=a\ominus b,$$ where $\ominus$ is the truncated minus, which is to say, $a\ominus b\coloneqq \max\{0,a-b\}$. 
		Then  $d_L$ and $d_R$ are quasi-metrics  on $[0,\infty]$.  These two quasi-metrics are opposite to each other in the sense that $d_R(a,b)=d_L^{\rm op}(a,b)\coloneqq d_L(b,a)$. The underlying order of $([0,\infty],d_L)$ is the usual relation $\geq$ between real numbers; the   underlying order of $([0,\infty],d_R)$ is the usual relation $\leq$ between real numbers. 
	\end{example}

	Let $f\colon (X,d_X)\longrightarrow (Y,d_Y)$ be a map between quasi-metric spaces. We say that \begin{itemize}\item $f$ is \emph{non-expansive} if  for all $x,y\in X$,
		$ d_X(x,y)\geq d_Y(f(x),f(y)) $;   \item
		$f$   is \emph{isometric} if  for all $x,y\in X$,  $d_X(x,y)= d_Y(f(x),f(y)) $. \end{itemize}

	Quasi-metric spaces and non-expansive maps form a category $ \sf qMet$. Since any non-expansive map preserves the underlying order, taking underlying order defines a functor from the category $ \sf qMet$ to the category of partially ordered sets and maps preserving order. For any quasi-metric spaces $(X,d_X)$ and $(Y,d_Y)$, the relation $\sqsubseteq$ on the set of  non-expansive maps from  $(X,d_X)$ to $(Y,d_Y)$ given by $$f\sqsubseteq g \quad\text{if}\quad \forall x\in X,~ f(x)\sqsubseteq_Y g(x)$$ is a partial order. With this order relation on the hom-sets the category $\sf qMet$ becomes a locally ordered category, hence a  2-category.
	
	The category $\sf qMet$ is complete and cocomplete. In particular, for every set $X$  the $X$-power of $([0,\infty],d_L)$ is given by $([0,\infty]^X,\rho_X)$, where
	$$  \rho_X(\lambda,\mu)=\sup_{x\in X}\mu(x)\ominus \lambda(x)$$ for all $\lambda,\mu\in [0,\infty]^X$. The underlying order $\sqsubseteq$ of  $([0,\infty]^X,\rho_X)$ is given by \[\lambda\sqsubseteq \mu\quad \text{if and only if}\quad  \forall x\in X,\lambda(x)\geq \mu(x).\]  
	
	Suppose $(X,d)$ is a quasi-metric space; suppose $\{x_i\}_{i\in D} $ is a net and $x$ is an element  of   $(X,d)$. We say that 
	\begin{itemize}\item 	
		$\{x_i\}_{i\in D} $ is forward Cauchy if 
		$$\inf_{i\in D} \sup_{k\geq j\geq i} d(x_j,x_k)=0.$$
		\item  $x$ is a Yoneda limit of $\{x_i\}_{i\in D}$ if for all $y\in X$, 
		$$d(x,y)=\inf_{i\in D} \sup_{ j\geq i} d(x_j,y).$$ \end{itemize}

	A quasi-metric space is  \emph{Yoneda complete}  if each of its forward Cauchy nets has a Yoneda limit. A non-expansive map $f\colon (X,d_X)\longrightarrow (Y,d_Y)$ is \emph{Yoneda continuous}  provided that it preserves Yoneda limits of forward Cauchy nets in the sense for each forward Cauchy net $\{x_i\}_{i\in D} $ of $(X,d)$, if $x$ is a Yoneda limit of $\{x_i\}_{i\in D} $, then $f(x)$ is a Yoneda limit of $\{f(x_i)\}_{i\in D} $. The notion of Yoneda limits of forward Cauchy nets originated in Smyth  \cite{Smyth88}. For more information on Yoneda complete quasi-metric spaces the reader is referred to \cite{Bon-Rutten1997,Goubault2013,Goubault2017,KS2002}.

	A formal ball of a  quasi-metric space $(X,d)$ is a pair $(x,r)$ with $x\in X$ and $r\in [0,\infty)$. The relation $\sqsubseteq$ on the set of formal balls, given by
	$$(x,r)\sqsubseteq(y,s)\quad \text{if}\quad  d(x,y)\leq r-s,$$ 
	is reflexive, transitive, and anti-symmetric, hence a partial order. We write $(\mathrm{B}X,\sqsubseteq)$, or simply  $\mathrm{B}X$, for the set of formal balls of $(X,d)$ ordered by $\sqsubseteq$. The formal ball construction dates back to Weihrauch and Schreiber \cite {Weihrauch1981}, it is a bridge between quasi-metric spaces and domain theory, see e.g. \cite{Edalat1998,Goubault2013,Goubault2019,Goubault2017}.
	
	In this paper, each directed set of $\mathrm{B}X$ will be viewed as a monotone net indexed by itself. That means, we'll write a directed set of $\mathrm{B}X$ as a net $\{(x_i,r_i)\}_{i\in D}$ with $$i\leq j\quad \text{if and only if}\quad   (x_i,r_i)\sqsubseteq(x_j,r_j).$$ 
	
	Yoneda limits are closely related to joins of directed sets of $\mathrm{B}X $. If $\{(x_i,r_i)\}_{i\in D}$ is a directed set of $\mathrm{B}X $,  the net $\{x_i\}_{i\in D}$ is  clearly forward Cauchy.  The following lemma, contained in  \cite[Theorem 3.3]{Ali-Akbari2009}, also in   \cite[Lemma 7.7]{Kostanek2010} and  \cite[Lemma 7.4.25]{Goubault2013}, says that if the forward Cauchy net $\{x_i\}_{i\in D}$ has a Yoneda limit, then the directed set $\{(x_i,r_i)\}_{i\in D}$ has a join.
	
	\begin{lemma}\label{join of directed set in BX}  
		Suppose $(X,d)$ is a quasi-metric space and   $\{(x_i,r_i)\}_{i\in D}$ is a directed set of $\mathrm{B}X $. If  $x$ is a Yoneda limit of the forward Cauchy net $\{x_i\}_{i\in D}$  and  $r=\inf_{i\in D}r_i$,  then $(x,r)$ is a join of  $\{(x_i,r_i)\}_{i\in D}$ in $\mathrm{B}X $. \end{lemma}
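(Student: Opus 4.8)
The plan is to verify directly the two defining properties of a join: that $(x,r)$ is an upper bound of $\{(x_i,r_i)\}_{i\in D}$ in $\mathrm{B}X$, and that it lies below every other upper bound. I would begin with a preliminary remark. Since $D$ is a (nonempty) directed set indexed monotonically, $i\leq j$ gives $d(x_i,x_j)\leq r_i-r_j$, hence $r_j\leq r_i$; so $\{r_i\}_{i\in D}$ is an antitone net of nonnegative reals, $r=\inf_{i\in D}r_i$ lies in $[0,\infty)$, and $(x,r)$ is a genuine formal ball. Moreover $\sup_{j\geq i}r_j=r_i$ for every $i\in D$, a fact I will use in the last step.

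For the upper bound property I would show $d(x_i,x)\leq r_i-r$ for each $i\in D$. The key sub-step is to bound $d(x_i,x)$ by the tail oscillation $\sup_{j\geq i}d(x_i,x_j)$. Applying the Yoneda limit identity with $y=x$ and using $d(x,x)=0$ gives $\inf_{k\in D}\sup_{j\geq k}d(x_j,x)=0$, so for any $\varepsilon>0$ there is $k\in D$ with $d(x_j,x)<\varepsilon$ whenever $j\geq k$. Picking, by directedness, some $j$ with $j\geq i$ and $j\geq k$, the triangle inequality yields $d(x_i,x)\leq d(x_i,x_j)+d(x_j,x)<\sup_{j'\geq i}d(x_i,x_{j'})+\varepsilon$; letting $\varepsilon\to 0$ gives $d(x_i,x)\leq\sup_{j\geq i}d(x_i,x_j)$. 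Since $j\geq i$ forces $d(x_i,x_j)\leq r_i-r_j\leq r_i-r$, this supremum is at most $r_i-r$, so $(x_i,r_i)\sqsubseteq(x,r)$.

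For the least-upper-bound property, let $(y,s)$ be any upper bound, so $d(x_i,y)\leq r_i-s$ for all $i\in D$. Here the Yoneda limit identity is used in full: $d(x,y)=\inf_{i\in D}\sup_{j\geq i}d(x_j,y)\leq\inf_{i\in D}\sup_{j\geq i}(r_j-s)=\inf_{i\in D}\bigl(\sup_{j\geq i}r_j-s\bigr)=\inf_{i\in D}(r_i-s)=r-s$, the penultimate equality being the antitonicity observation above. Hence $(x,r)\sqsubseteq(y,s)$, and $(x,r)$ is the join of $\{(x_i,r_i)\}_{i\in D}$.

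I expect the only non-routine ingredient to be the inequality $d(x_i,x)\leq\sup_{j\geq i}d(x_i,x_j)$ in the second paragraph — essentially the assertion that a Yoneda limit of a forward Cauchy net sits on top of the net. Everything else is a direct unwinding of the definitions of formal ball, of the order on $\mathrm{B}X$, and of Yoneda limit. It is also worth noting that forward Cauchyness of $\{x_i\}_{i\in D}$, which by taking $j=i$ in its definition yields $\inf_{i}\sup_{k\geq i}d(x_i,x_k)=0$, already guarantees that the bounding supremum is finite and tends to $0$; but for the join statement the sharper estimate $r_i-r$ coming from the order on $\mathrm{B}X$ is what is actually needed.
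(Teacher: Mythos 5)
Your proof is correct: the upper-bound step via the estimate $d(x_i,x)\leq\sup_{j\geq i}d(x_i,x_j)\leq r_i-r$ (obtained from the Yoneda limit identity at $y=x$ plus the triangle inequality) and the least-upper-bound step via the identity $d(x,y)=\inf_{i\in D}\sup_{j\geq i}d(x_j,y)\leq r-s$ are both sound, and the antitonicity of $\{r_i\}_{i\in D}$ is handled properly. The paper itself gives no proof of this lemma, citing it instead from Ali-Akbari et al., Kostanek--Waszkiewicz and Goubault-Larrecq, where the argument is essentially the same direct verification you present.
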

	
	The converse of Lemma \ref{join of directed set in BX} is not true in general. This leads to the following
	
	\begin{definition} \label{defn of standard} A quasi-metric space $(X,d)$ is standard provided that whenever a directed set  $\{(x_i,r_i)\}_{i\in D}$ of formal balls has a join,   the forward Cauchy net $\{x_i\}_{i\in D}$ has a Yoneda limit. \end{definition}
	
	In other words, a quasi-metric space $(X,d)$ is standard if the joins of directed sets of $\mathrm{B}X $ are computed componentwise in the sense that  $(b,r)$ is a join of a directed set  $\{(x_i,r_i)\}_{i\in D}$ if and only if $b$  is a Yoneda limit of   $\{x_i\}_{i\in D}$ and $r$ is a limit (in the usual sense) of the net $\{r_i\}_{i\in D}$ of real numbers.
	
	We hasten to remark that \emph{standard quasi-metric spaces} are introduced in Goubault-Larrecq and Ng \cite[Definition 2.1]{Goubault2017} in a different way. Precisely, in \cite{Goubault2017} a quasi-metric space $(X, d)$ is said to be  standard if for each directed set of formal balls $\{(x_i, r_i)\}_{i\in D}$ and for each $s <\infty$, $\{(x_i, r_i)\}_{i\in D}$ has a join in $\mathrm{B}X $ if and only if $\{(x_i, s+r_i)\}_{i\in D}$ has a join in $\mathrm{B}X $. In the following we show that  standard quasi-metric spaces in the sense of  Definition \ref{defn of standard} coincide with those in the sense of \cite[Definition 2.1]{Goubault2017}.   Definition \ref{defn of standard}   emphasizes  the fact that for a standard quasi-metric space,   joins of directed sets of formal balls are computed componentwise.
	
	\begin{lemma}\label{yoneda complete = dcpo} Suppose $(X,d)$ is a quasi-metric space,   $\{(x_i,r_i)\}_{i\in D}$ is a directed set of $\mathrm{B}X $.  If 
		\begin{enumerate} 
			\item[\rm(i)] $\inf_{i\in D}r_i=0$, 
			\item[\rm(ii)] $(b,0)$ is a join of   $\{(x_i,r_i)\}_{i\in D}$, and \item[\rm(iii)] for each $t<\infty$ the directed set $\{(x_i,t+ r_i)\}_{i\in D}$ of $\mathrm{B}X $ has a join, 
		\end{enumerate} then  $b$ is a Yoneda limit  of  the forward Cauchy net $\{x_i\}_{i\in D}$.
	\end{lemma}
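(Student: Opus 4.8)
The plan is to verify directly that $b$ satisfies the defining identity of a Yoneda limit, namely $d(b,y)=\phi(y)$ for every $y\in X$, where I write $\phi(y):=\inf_{i\in D}\sup_{j\ge i}d(x_j,y)$ for the forward limit weight of the net. First I would unpack hypothesis (ii): $(x_i,r_i)\sqsubseteq(b,0)$ means $d(x_i,b)\le r_i$ for all $i$, and since the radii are monotone ($\sup_{j\ge i}r_j=r_i$) and $\inf_i r_i=0$ by (i), this gives $\phi(b)=0$; feeding $d(x_j,y)\le d(x_j,b)+d(b,y)$ into the definition of $\phi(y)$ then yields $\phi(y)\le d(b,y)$ for all $y$. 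This already settles the inequality $d(b,y)\ge\phi(y)$ and the case $\phi(y)=\infty$, so from now on I fix $y$ with $t:=\phi(y)<\infty$ and aim at $d(b,y)\le t$.

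The key step is to show that $(y,0)$ is an upper bound of the directed set $\{(x_i,t+r_i)\}_{i\in D}$ appearing in (iii). For this I would observe that the net $\{d(x_i,y)-r_i\}_{i\in D}$ is monotone non-decreasing --- because $(x_i,r_i)\sqsubseteq(x_j,r_j)$ forces $d(x_i,y)-r_i\le d(x_i,x_j)+d(x_j,y)-r_i\le d(x_j,y)-r_j$ --- and that $\sup_i\bigl(d(x_i,y)-r_i\bigr)=t$, using $\sup_{j\ge i}r_j=r_i$ and $\inf_i r_i=0$ to compare it with $\inf_i\sup_{j\ge i}d(x_j,y)$. Hence $d(x_i,y)\le t+r_i$, i.e. $(x_i,t+r_i)\sqsubseteq(y,0)$, for every $i$.

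Now I invoke (iii): let $(c,u)$ be a join of $\{(x_i,t+r_i)\}_{i\in D}$. Comparing radii in $(x_i,t+r_i)\sqsubseteq(c,u)$ gives $u\le t+r_i$ for all $i$, hence $u\le t$. On the other hand $(b,t)$ is an upper bound of this directed set, since $(x_i,t+r_i)\sqsubseteq(b,t)$ is just $d(x_i,b)\le r_i$, so minimality of $(c,u)$ gives $(c,u)\sqsubseteq(b,t)$ and therefore $t\le u$; thus $u=t$. Then $(x_i,t+r_i)\sqsubseteq(c,t)$ gives $d(x_i,c)\le r_i$, so $(c,0)$ is an upper bound of $\{(x_i,r_i)\}_{i\in D}$, and minimality of $(b,0)$ from (ii) gives $d(b,c)=0$; together with $d(c,b)\le u-t=0$ coming from $(c,t)\sqsubseteq(b,t)$ and antisymmetry of $d$, this forces $b=c$. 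Consequently $(b,t)=(c,u)\sqsubseteq(y,0)$, i.e. $d(b,y)\le t=\phi(y)$, which combined with the first step gives $d(b,y)=\phi(y)$; as $y$ was arbitrary, $b$ is a Yoneda limit of $\{x_i\}_{i\in D}$.

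The only genuinely non-routine point is the middle paragraph --- recognizing that shifting every radius up by exactly $t=\phi(y)$ is what turns $(y,0)$ into an upper bound, so that hypothesis (iii) can be brought to bear; once $(c,u)$ is in hand, the identification $(c,u)=(b,t)$ and the conclusion are straightforward manipulations of the formal-ball order together with the least-upper-bound properties in (ii) and (iii).
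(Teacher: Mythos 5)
Your proof is correct and follows essentially the same route as the paper's: the easy inequality $d(b,y)\geq\phi(y)$ from hypothesis (ii), then for $t=\phi(y)<\infty$ the identification of the join of $\{(x_i,t+r_i)\}_{i\in D}$ guaranteed by (iii) with $(b,t)$, and finally $(b,t)\sqsubseteq(y,0)$ because $(y,0)$ is an upper bound. Your middle paragraph (monotonicity of $d(x_i,y)-r_i$ with supremum $t$) is just a repackaging of the paper's direct estimate $r_i+t\geq d(x_i,y)$, so the two arguments coincide in substance.
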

	
	\begin{proof}   
		We wish to show that for all $y\in X$, \[d(b,y)=\inf_{i\in D}\sup_{j\geq i}d(x_j,y).\]
		
		Fix $i\in D$. For each $j\geq i$, since $(x_j,r_j)\sqsubseteq(b,0)$, then \begin{align*}r_i+ d(b,y)&\geq r_j+ d(b,y) 
			\geq d(x_j,b)+ d(b,y)
			\geq d(x_j,y),\end{align*} hence $$r_i+ d(b,y)\geq \sup_{j\geq i}d(x_j,y).$$  
		Therefore,  \begin{align*}d(b,y)&=\inf_{i\in D}(r_i+ d(b,y))  \geq \inf_{i\in D}\sup_{j\geq i}d(x_j,y).\end{align*}
		
		For the converse inequality, let $$t=\inf_{i\in D}\sup_{j\geq i}d(x_j,y).$$ We wish to show that $t\geq d(b,y)$. We assume   $t<\infty$ and finish the proof in two steps. 
		
		{\bf Step 1}. $(b,t)$ is a join of $\{(x_i, t+ r_i)\}_{i\in D}$. 
		
		By assumption $\{(x_i, t+ r_i)\}_{i\in D}$  has a join, say $(z,s)$. We   show that   $(b,t)=(z,s)$.    Since $(b,t)$ is an upper bound of   $\{(x_i, t+ r_i)\}_{i\in D}$, then $(z,s)\sqsubseteq (b,t)$, hence $s\geq t$. Since $r_i$ tends to $0$ and $t+ r_i\geq s+ d(x_i,z)$ for all $i\in D$, it follows that $t\geq s$.  Therefore $s=t$ and $d(z,b)=0$. Since $(z,t)$ is a join of $\{(x_i, t+ r_i)\}_{i\in D}$,  then  $t+ r_i\geq t+ d(x_i,z)$  for all $i\in D$, hence  $r_i\geq d(x_i,z)$ for all $i\in D$. This shows that $(z,0)$ is an upper bound of $\{(x_i,r_i)\}_{i\in D}$, so   $(b,0)\sqsubseteq (z,0)$   and $d(b,z)=0$. Therefore, $b=z$.
		
		{\bf Step 2}.   $t\geq d(b,y)$. 
		
		For each $i\in D$,    \begin{align*}r_i+ t&= r_i+\inf_{k\in D}\sup_{j\geq k}d(x_j,y) \\
			&= r_i+\inf_{k\geq i}\sup_{j\geq k}d(x_j,y)  \\
			&= \inf_{k\geq i}\sup_{j\geq k}(r_i+ d(x_j,y)) \\
			&\geq \inf_{k\geq i}\sup_{j\geq k}(r_j+ d(x_i,x_j)+ d(x_j,y))\\ &\geq d(x_i,y),\end{align*} then $(x_i, t+ r_i)\sqsubseteq (y,0)$. This shows that $(y,0)$ is an upper bound of  $\{(x_i, t+ r_i)\}_{i\in D}$, then $(b,t)\sqsubseteq(y,0)$  and   $t\geq d(b,y)$, as desired. \end{proof}

	\begin{proposition} \label{equivalence of the defn} For each quasi-metric space $(X,d)$, the following are equivalent: \begin{enumerate} 
			\item[\rm(1)] $(X,d)$ is standard in the sense of  Definition \ref{defn of standard}. \item[\rm(2)] For each directed set $\{(x_i,r_i)\}_{i\in D}$ of $\mathrm{B}X$,    if   $\{(x_i,r_i)\}_{i\in D}$  has a join, then so do the following directed sets: \begin{itemize} 
				\item[\rm(i)] $\{(x_i,t+ r_i)\}_{i\in D}$ for each $t<\infty$; and \item[\rm(ii)] $\{(x_i,  r_i-t)\}_{i\in D}$ for $t=\inf_{i\in D}r_i$.\end{itemize} \end{enumerate}
	\end{proposition}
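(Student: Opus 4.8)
The plan is to obtain both implications from Lemmas \ref{join of directed set in BX} and \ref{yoneda complete = dcpo}, with all the substance concentrated in $(2)\Rightarrow(1)$. First I would record a bookkeeping observation: if $\{(x_i,r_i)\}_{i\in D}$ is a directed set of $\mathrm{B}X$ and $t\in[0,\infty)$, then $d(x_i,x_j)\le r_i-r_j$ is equivalent to $d(x_i,x_j)\le(t+r_i)-(t+r_j)$, and also to $d(x_i,x_j)\le(r_i-t)-(r_j-t)$ whenever $t\le r_k$ for all $k$. Hence $\{(x_i,t+r_i)\}_{i\in D}$, and --- for $t=\inf_{i}r_i$ --- also $\{(x_i,r_i-t)\}_{i\in D}$, are again directed sets of $\mathrm{B}X$, indexed in exactly the same way as $\{(x_i,r_i)\}_{i\in D}$, and the forward Cauchy net underlying each of them is literally $\{x_i\}_{i\in D}$. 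Moreover, since $\inf_i(r_i-t)=0$ when $t=\inf_i r_i$, every join of $\{(x_i,r_i-t)\}_{i\in D}$ is necessarily of the form $(c,0)$.

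For $(1)\Rightarrow(2)$ I would argue as follows. Suppose $(X,d)$ is standard in the sense of Definition \ref{defn of standard} and $\{(x_i,r_i)\}_{i\in D}$ has a join; by standardness the net $\{x_i\}_{i\in D}$ has a Yoneda limit $b$. For clause (i), apply Lemma \ref{join of directed set in BX} to the directed set $\{(x_i,t+r_i)\}_{i\in D}$, whose underlying net is $\{x_i\}_{i\in D}$ with Yoneda limit $b$ and whose radii have infimum $t+\inf_i r_i$: it gives the join $(b,t+\inf_i r_i)$. For clause (ii), apply the same lemma to $\{(x_i,r_i-t)\}_{i\in D}$ with $t=\inf_i r_i$, whose radii have infimum $0$: it gives the join $(b,0)$. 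This direction is essentially immediate.

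For $(2)\Rightarrow(1)$, the substantive direction, assume (2) and let $\{(x_i,r_i)\}_{i\in D}$ have a join; put $t=\inf_i r_i$, which is finite since each $r_i$ is. By clause (ii) of (2), the directed set $\{(x_i,r_i-t)\}_{i\in D}$ has a join, which by the observation above has the form $(c,0)$. Since this shifted family has a join, clause (i) of (2) now applies to it and shows that $\{(x_i,s+(r_i-t))\}_{i\in D}$ has a join for every $s<\infty$. Thus the directed set $\{(x_i,r_i-t)\}_{i\in D}$ meets all three hypotheses of Lemma \ref{yoneda complete = dcpo}: its radii tend to $0$, $(c,0)$ is one of its joins, and all of its finite lifts have joins. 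That lemma then gives that $c$ is a Yoneda limit of the forward Cauchy net $\{x_i\}_{i\in D}$, which is the net underlying the original family $\{(x_i,r_i)\}_{i\in D}$. Hence $\{x_i\}_{i\in D}$ has a Yoneda limit, so $(X,d)$ is standard in the sense of Definition \ref{defn of standard}.

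The one genuinely delicate point is the normalization in $(2)\Rightarrow(1)$: one must subtract exactly $t=\inf_i r_i$ in order to land in the hypotheses of Lemma \ref{yoneda complete = dcpo}, which demands a join of radius $0$ with radii tending to $0$; and one must be careful that clause (i) of (2) is invoked for the shifted family $\{(x_i,r_i-t)\}_{i\in D}$, which is legitimate only because clause (ii) has already produced a join for that family. Everything else --- that shifting preserves the indexing order, that the underlying nets coincide, and that uniqueness of joins yields the componentwise description mentioned after Definition \ref{defn of standard} --- is routine verification.
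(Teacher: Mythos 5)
Your proof is correct and takes essentially the same route as the paper: the forward direction is Lemma \ref{join of directed set in BX} applied to the shifted families, and the reverse direction normalizes by subtracting $t=\inf_{i}r_i$, uses clause (ii) to get a join of the form $(c,0)$, applies clause (i) to the shifted family to get all finite lifts, and then invokes Lemma \ref{yoneda complete = dcpo}, exactly as in the paper. The only cosmetic difference is that the paper goes one step further and identifies the resulting Yoneda limit with the first component of the original join via uniqueness, which Definition \ref{defn of standard} does not actually require.
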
 
	
	\begin{proof} 
		
		$(1)\Rightarrow(2)$ Lemma \ref{join of directed set in BX}.
		
		$(2)\Rightarrow(1)$  Suppose $\{(x_i,r_i)\}_{i\in D}$ is a directed set of $\mathrm{B}X$. We wish to show that if $(x,r)$ is a join of $\{(x_i,r_i)\}_{i\in D}$, then $x$ is a Yoneda limit of the forward Cauchy net $\{x_i\}_{i\in D}$. 
		Without loss of generality we may assume that $r_i>0$ for all $i\in D$. 
		
		Let $s=\inf_{i\in D}r_i$. Then $\{(x_i,  r_i-s)\}_{i\in D}$ is a directed set of $\mathrm{B}X$ with $\inf_{i\in D}(r_i-s)=0$. By (ii)   the directed set $\{(x_i, r_i-s)\}_{i\in D}$ has a join, say $(b,0)$. Applying (i) to the  directed set $\{(x_i, r_i-s)\}_{i\in D}$ yields that for each $t<\infty$, the directed set $\{(x_i,t+ r_i-s)\}_{i\in D}$ has a join. Thus the directed set $\{(x_i,r_i-s)\}_{i\in D}$ satisfies the conditions in Lemma \ref {yoneda complete = dcpo}, so $b$ is a Yoneda limit of the net $\{x_i\}_{i\in D}$. By Lemma \ref{join of directed set in BX}, $(b,s)$ is a join of $\{(x_i,r_i)\}_{i\in D}$. Since both $(x,r)$ and $(b,s)$ are join of the directed set $\{(x_i,r_i)\}_{i\in D}$, it follows that   $x=b$,  showing that $x$ is a Yoneda limit of   $\{x_i\}_{i\in D}$. \end{proof}

	It is clear that a quasi-metric space $(X,d)$ is standard in the sense of Goubault-Larrecq and Ng \cite[Definition 2.1]{Goubault2017} if and only if it satisfies (2) in Proposition \ref{equivalence of the defn}, so  the postulation of standard quasi-metric spaces in Definition \ref{defn of standard} agrees with that in \cite[Definition 2.1]{Goubault2017}. 
	
	\begin{theorem} Let $(X,d)$ be a quasi-metric space. \begin{enumerate}
			\item[\rm(i)] {\rm(Goubault-Larrecq and Ng \cite[Proposition 2.2]{Goubault2017})} If $(X,d)$ is Yoneda complete, then it is standard.
			\item[\rm(ii)]  {\rm(Kostanek and Waszkiewicz \cite[Theorem 7.1]{Kostanek2010})} $(X,d)$ is Yoneda complete if and only if $\mathrm{B}X$ is a dcpo.
	\end{enumerate} \end{theorem}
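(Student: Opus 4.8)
The statement collects two quoted results, and both admit short arguments from the lemmas already established; only one implication of (ii) requires real work. For (i): if $(X,d)$ is Yoneda complete then, since the point net $\{x_i\}_{i\in D}$ underlying any directed family $\{(x_i,r_i)\}_{i\in D}$ of $\mathrm{B}X$ is forward Cauchy, it has a Yoneda limit, so $(X,d)$ is standard in the sense of Definition \ref{defn of standard}. For the forward implication of (ii), with the same notation, that Yoneda limit together with $r=\inf_{i\in D}r_i$ is a join of $\{(x_i,r_i)\}_{i\in D}$ by Lemma \ref{join of directed set in BX}, so $\mathrm{B}X$ is a dcpo.

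For the reverse implication of (ii) I would argue as follows. First, if $\mathrm{B}X$ is a dcpo then condition (2) of Proposition \ref{equivalence of the defn} holds vacuously --- all the radius-translated families occurring there automatically have joins --- so $(X,d)$ is standard. Now let $\{x_i\}_{i\in D}$ be forward Cauchy and set $b_i=\sup_{k\geq j\geq i}d(x_j,x_k)$, so that $\inf_{i\in D}b_i=0$ and $b_i<\infty$ for $i$ in a cofinal subset of $D$. The family of formal balls to use is not the naive $\{(x_i,b_i)\}_{i\in D}$, which need not be directed, but
\[ \widetilde{T}=\{(x_i,\varepsilon)\in\mathrm{B}X : i\in D,\ b_i<\varepsilon<\infty\}. \]
One checks that $\widetilde{T}$ is directed: given $(x_i,\varepsilon),(x_{i'},\varepsilon')\in\widetilde{T}$, put $\mu=\min\{\varepsilon-b_i,\varepsilon'-b_{i'}\}>0$, pick $i_0$ with $b_{i_0}<\mu$, and pick $i''\geq i,i',i_0$; then $(x_{i''},\mu)\in\widetilde{T}$ lies above both, using only $d(x_i,x_{i''})\leq b_i$ and $d(x_{i'},x_{i''})\leq b_{i'}$. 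Since $\mathrm{B}X$ is a dcpo, $\widetilde{T}$ has a join; as the radii occurring in $\widetilde{T}$ have infimum $\inf_i b_i=0$, this join has the form $(b,0)$, and letting $\varepsilon\downarrow b_i$ in $(x_i,\varepsilon)\sqsubseteq(b,0)$ yields $d(x_i,b)\leq b_i$ for every $i$ with $b_i<\infty$.

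It then remains to prove that $b$ is a Yoneda limit of the original net, i.e. $d(b,y)=\varphi(y):=\inf_{i\in D}\sup_{j\geq i}d(x_j,y)$ for all $y$. The inequality $\varphi(y)\leq d(b,y)$ is immediate from $d(x_j,y)\leq d(x_j,b)+d(b,y)\leq b_j+d(b,y)\leq b_i+d(b,y)$ for $j\geq i$, together with $\inf_i b_i=0$. For the reverse inequality one records the elementary estimate $d(x_i,y)\leq b_i+\varphi(y)$, valid for all $i$ and $y$ because $d(x_i,y)\leq b_i+d(x_j,y)$ for every $j\geq i$ and $\inf_{j\geq i}d(x_j,y)\leq\varphi(y)$. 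Assuming $\varphi(y)<\infty$ (the other case being trivial), this estimate says exactly that $(y,0)$ is an upper bound of the directed family $\{(x_i,\varepsilon+\varphi(y)):(x_i,\varepsilon)\in\widetilde{T}\}$ obtained from $\widetilde{T}$ by translating all radii by $\varphi(y)$. By standardness the join $(b,0)$ of $\widetilde{T}$ exhibits $b$ as the Yoneda limit of the point net underlying $\widetilde{T}$; since the translated family has the same point net, Lemma \ref{join of directed set in BX} identifies its join as $(b,\varphi(y))$. Hence $(b,\varphi(y))\sqsubseteq(y,0)$, i.e. $d(b,y)\leq\varphi(y)$, so $d(b,y)=\varphi(y)$; the original net has the Yoneda limit $b$, and $(X,d)$ is Yoneda complete.

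The genuinely delicate step is the reverse inequality $d(b,y)\leq\varphi(y)$, and I expect the three ingredients it rests on to be where care is needed: that "$\mathrm{B}X$ is a dcpo" silently yields standardness through Proposition \ref{equivalence of the defn}; that $\widetilde{T}$ --- rather than any net indexed by $D$ --- is the directed family on which to invoke directed completeness of $\mathrm{B}X$; and that shifting every radius by the constant $\varphi(y)$ converts the trivial triangle-inequality bound $d(x_i,y)\leq b_i+\varphi(y)$, via Lemma \ref{join of directed set in BX}, into the required estimate on $d(b,y)$.
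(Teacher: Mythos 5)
The paper states this theorem purely as a pair of quoted results (Goubault-Larrecq--Ng and Kostanek--Waszkiewicz) and supplies no proof of its own, so there is no internal argument to compare against; what you have written is a self-contained reconstruction from the paper's preliminaries, and it checks out. Part (i) and the forward half of (ii) are indeed immediate from the fact that the underlying net of a directed family of formal balls is forward Cauchy, together with Lemma \ref{join of directed set in BX}. For the reverse half of (ii), each of the three delicate points you flag is handled correctly: standardness of $(X,d)$ does follow from Proposition \ref{equivalence of the defn}, since condition (2) there is vacuous when $\mathrm{B}X$ is a dcpo; the passage from the possibly non-directed $\{(x_i,b_i)\}_{i\in D}$ to the directed $\widetilde{T}$ is the right move, and your verification that the join of $\widetilde{T}$ must have radius $0$ and first component $b$ with $d(x_i,b)\leq b_i$ is sound (using that $i\mapsto b_i$ is antitone, so $b_{i''}\leq b_{i_0}<\mu$); and the translation-by-$\varphi(y)$ step correctly combines the estimate $d(x_i,y)\leq b_i+\varphi(y)$ with Lemma \ref{join of directed set in BX} applied to the $\widetilde{T}$-indexed translated family to pin its join down as $(b,\varphi(y))$, whence $d(b,y)\leq\varphi(y)$. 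One remark on economy: your final two inequalities essentially re-prove, for the original index set $D$, what the paper's Lemma \ref{yoneda complete = dcpo} establishes for the index set $\widetilde{T}$ (its hypotheses (i)--(iii) all hold for $\widetilde{T}$ once $\mathrm{B}X$ is a dcpo). Invoking that lemma would only give that $b$ is a Yoneda limit of the \emph{reindexed} net, so a computation against the original net $\{x_i\}_{i\in D}$ is needed in any case, and your direct derivation of $d(b,y)=\varphi(y)$ is the cleanest way to close that gap.
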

	
	\section{The presheaf monad}
	
	Following Lawvere \cite{Lawvere1973}, we view quasi-metric spaces as separated  categories enriched over the quantale $([0,\infty],\geq,+,0)$, non-expansive maps as functors between such categories. 
	This point of view has proved to be fruitful, see e.g. \cite{Bon-Rutten1997,Rutten1998,Vickers2005}. In particular, the quasi-metric $d_L$ on $[0,\infty]$ in Example \ref{internal hom} is  the ``internal hom'' on the base category, i.e., the quantale $([0,\infty],\geq,+,0)$.  For sake of self-containment, we recall in this section some basic ideas about the presheaf monad on the category of quasi-metric spaces and non-expansive maps.  We refer to Borceux \cite{Borceux1994a,Borceux1994b}, Kelly \cite{Kelly1982} and  Lawvere \cite{Lawvere1973} for category theory and enriched category theory.  
	
	Suppose $(X,d_X), (Y,d_Y)$ are quasi-metric spaces. A distributor $\phi\colon (X,d_X)\oto (Y,d_Y)$ is a map $\phi\colon X\times Y\to[0,\infty]$ such that for all $x,x'\in X$ and $y,y'\in Y$, $$d_Y(y,y')+\phi(x,y)+d_X(x',x)\geq \phi(x',y').$$ 
	
	For distributors $\phi\colon (X,d_X)\oto(Y,d_Y)$ and $\psi\colon(Y,d_Y)\oto (Z,d_Z)$, the composite  $\psi\circ\phi$ refers to the distributor $(X,d_X)\oto(Z,d_Z)$ given by $$\psi\circ\phi(x,z)=\inf_{y\in Y}(\psi(y,z)+\phi(x,y)).$$
	
	Let $f\colon (X,d_X)\to(Y,d_Y)$ be a non-expansive map. The graph of $f$ refers to the distributor  $$f_*\colon(X,d_X)\oto(Y,d_Y), \quad (x,y)\mapsto d_Y(f(x),y);$$   the cograph of $f$ refers to the distributor  $$f^*\colon(Y,d_Y)\oto(X,d_X), \quad (y,x)\mapsto d_Y(y,f(x)).$$  It is well-known that the graph $f_*$ is left adjoint to the cograph $f^*$ in the sense that $$f^*\circ f_*(x,x')\leq d_X(x,x')\quad \text{and}\quad f_*\circ f^*(y,y')\geq d_Y(y,y')$$ for all $x,x'\in X$ and $y,y'\in Y$.
	
	A \emph{weight} of a quasi-metric space $(X,d)$ is a distributor $\phi\colon(X,d)\oto\star$, where $\star$ denotes the singleton metric space. In other words, a weight of $(X,d)$ is a map $\phi\colon X\longrightarrow[0,\infty]$ such that  $\phi(y)+d(x,y)\geq\phi(x)$ for all $x,y\in X$.  It is clear that a weight of $(X,d)$ is exactly a non-expansive map $(X,d^{\rm op})\longrightarrow ([0,\infty],d_L)$; or equivalently, a non-expansive map $(X,d)\longrightarrow ([0,\infty],d_R)$.

	For each quasi-metric space $(X,d)$, the following holds: \begin{itemize}
		\item   For each $x\in X$, $d(-,x)$ is a weight of $(X,d)$.  Weights  of this form are said to be \emph{representable}.
		\item  If each   $\phi_i$ $(i\in J)$  is a weight of $(X,d)$, then so are $\inf_{i\in J}\phi_i$ and $\sup_{i\in J}\phi_i$.
		\item  If $\phi$ is a  weight of $(X,d)$, then so are $r+\phi$ and $\phi\ominus r$ for all $r\in [0,\infty]$.
	\end{itemize}
	
	We write $$\mathcal{P}X$$ for the set of all weights of $(X,d)$ and equip it with the quasi-metric $\rho_X$ inherited from $([0,\infty]^X,\rho_X)$; that is, for all weights $\phi$ and $\psi$,  $$   \rho_X(\phi,\psi)=\sup_{x\in X}\psi(x)\ominus \phi(x).$$  The quasi-metric space $(\mathcal{P} X,\rho_X)$ is an analogue of the category of presheaves in the $[0,\infty]$-enriched context.  
	
	The following lemma ensures the map
	$${\sf y}_X\colon (X,d)\longrightarrow (\mathcal{P}X,\rho_X), \quad {\sf y}_X(x)=d(-,x)$$
	is isometric, it is the \emph{Yoneda embedding} in the $[0,\infty]$-enriched context.
	\begin{lemma}[Yoneda lemma] \label{yoneda lemma}
		Let $(X,d)$ be a quasi-metric space. Then for each  $x\in X$ and each    $\phi\in\mathcal{P} X$,
		$$\rho_X(d(-,x),\phi)=\phi(x).$$
	\end{lemma}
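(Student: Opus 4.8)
The plan is to unwind the definition of $\rho_X$ and reduce the claimed identity to two inequalities, each an instant consequence of a defining property already in hand. Recall from the description of the power $([0,\infty]^X,\rho_X)$ that $\rho_X(d(-,x),\phi)=\sup_{z\in X}\bigl(\phi(z)\ominus d(z,x)\bigr)$, so the task is to show $\sup_{z\in X}\bigl(\phi(z)\ominus d(z,x)\bigr)=\phi(x)$.

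First I would prove the inequality ``$\geq$'', which is immediate: the supremand at $z=x$ equals $\phi(x)\ominus d(x,x)=\phi(x)\ominus 0=\phi(x)$, so the supremum is at least $\phi(x)$. In particular this already settles the case $\phi(x)=\infty$.

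Next I would prove ``$\leq$'' by checking that $\phi(x)$ bounds every term of the supremum. Fix $z\in X$. Since $\phi$ is a weight of $(X,d)$, the defining inequality $\phi(y)+d(x,y)\geq\phi(x)$, applied with the pair $(z,x)$ in place of $(x,y)$, gives $\phi(x)+d(z,x)\geq\phi(z)$; hence $\phi(z)-d(z,x)\leq\phi(x)$ and therefore, since $\phi(x)\geq 0$, also $\phi(z)\ominus d(z,x)\leq\phi(x)$. Taking the supremum over $z\in X$ yields $\rho_X(d(-,x),\phi)\leq\phi(x)$, and combining this with the previous paragraph completes the argument.

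There is no serious obstacle here; the only thing to watch is the truncated subtraction $\ominus$ as against ordinary subtraction together with the possibility of infinite values, but these are harmless: the weight inequality forbids $\phi(z)=\infty$ unless $\phi(x)=\infty$ or $d(z,x)=\infty$, and in either of those subcases the bound $\phi(z)\ominus d(z,x)\leq\phi(x)$ is trivial.
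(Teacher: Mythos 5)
Your proof is correct, and it is exactly the standard argument: the paper states this lemma without proof, and your two inequalities (the term at $z=x$ for ``$\geq$'', the weight inequality $\phi(x)+d(z,x)\geq\phi(z)$ for ``$\leq$'') are precisely what the authors are implicitly relying on. The care you take with $\ominus$ and infinite values is appropriate and handled correctly.
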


	Lwt $f\colon (X,d_X)\to (Y,d_Y)$ and $g\colon (Y,d_Y)\to (X,d_X)$ be non-expansive maps. We say that $f$ is left adjoint to $g$, or $g$ is right adjoint to $f$, and write $$f\dashv g\colon (Y,d_Y)\to (X,d_X),$$  if for all $x\in X$ and $y\in Y$, $$d_Y(f(x),y)=d_X(x,g(y)).$$ It is not hard to check that $f$ is left adjoint to $g$ if and only if $f\colon (X,\sqsubseteq_X)\to (Y,\sqsubseteq_Y)$ is left adjoint to $g\colon (Y,\sqsubseteq_Y)\to (X,\sqsubseteq_X)$, see e.g. \cite{Lai-Zhang2007}.
	
	Each non-expansive map $f\colon (X,d_X)\to (Y,d_Y)$ gives rise to an adjunction $$f^\rightarrow\dashv f^\leftarrow\colon (\mathcal{P}Y,\rho_Y)\longrightarrow(\mathcal{P}X,\rho_X),$$  where
	$$f^\rightarrow\colon (\mathcal{P}X,\rho_X)\longrightarrow(\mathcal{P}Y,\rho_Y), \quad f^{\rightarrow}(\phi)=\phi\circ f^* $$ and 
	$$f^\leftarrow\colon (\mathcal{P}Y,\rho_Y)\longrightarrow(\mathcal{P}X,\rho_X), \quad f^\leftarrow(\psi)=\psi\circ f_*.$$

	The assignment $f\mapsto f^\rightarrow$ defines a functor $$\mathcal{P}\colon{\sf qMet}\longrightarrow{\sf qMet},$$   called the presheaf functor on the category $\sf qMet$, which plays an important role in the study of quasi-metric spaces.

	Let  $f\colon (K,d_K)\longrightarrow (X,d_X)$ be a non-expansive map and  let $\phi$ be a weight of $(K,d_K)$. The \emph{colimit of $f$ weighted by $\phi$}, written $\colim_\phi f$, is an element $b$ of $X$ such that for all $x\in X$, $$d_X(b,x)= \rho_X(\phi\circ f^*,d_X(-,x))=\rho_K(\phi,d_X(f(-),x)). $$  The colimit of the identity map $X\longrightarrow X$ weighted by $\phi$, when exists, will also be called the \emph{colimit of $\phi$} by abuse of language, and denoted by $\colim\phi$ instead of $\colim_\phi 1_X$.   
	
	A quasi-metric space $(X,d)$ is \emph{cocomplete} if all weighted colimits of non-expansive maps to $(X,d)$ exist.  For a quasi-metric space $(X,d)$ the following are equivalent: \begin{enumerate} 
		\item[\rm(1)] $(X,d)$ is cocomplete. \item[\rm(2)] Every weight of $(X,d)$ has a colimit. \item[\rm(3)] The Yoneda embedding ${\sf y}_X\colon (X,d)\longrightarrow(\mathcal{P}X,\rho_X)$ has a left adjoint. \end{enumerate}
	
	The dual notion of colimit is that of limit, which we spell out here for convenience of the reader. A \emph{coweight} of a quasi-metric space $(X,d)$ is a distributor $\psi\colon\star\oto(X,d)$. In other words, a coweight of $(X,d)$ is a non-expansive map $\psi\colon(X,d)\longrightarrow ([0,\infty],d_L)$. 
	
	Let  $f\colon (K,d_K)\longrightarrow (X,d_X)$ be a non-expansive map and   $\psi$ a coweight of $(K,d_K)$. The \emph{limit of $f$ weighted by $\psi$}, written $\lim_\psi f$, is an element $a$ of $X$ such that for all $x\in X$, $$d_X(x,a) =\rho_K(\psi,d_X(x,f(-))). $$
	
	A quasi-metric space $(X,d)$ is \emph{complete} if all weighted limits of non-expansive maps to $(X,d)$ exist. It is  known that a quasi-metric space is cocomplete if and only if it is complete, see e.g. \cite{Stubbe2005}.
	
	\begin{example}For each quasi-metric space $(X,d)$, the space $(\mathcal{P} X,\rho_X)$ is both cocomplete and complete. This is a special case of a general result in enriched category theory, see e.g.  \cite{Stubbe2005,Stubbe2006}. 
		
		For each weight $\Phi$ of $(\mathcal{P} X,\rho_X)$, the  colimit of $\Phi$ is given by $$\colim\Phi=\inf_{\phi\in\mathcal{P} X}(\Phi(\phi)+\phi).$$  
		
		For each quasi-metric space $(X,d)$, the space $(\mathcal{P} X,\rho_X)$ is complete. For each coweight $\Psi$ of $(\mathcal{P} X,\rho_X)$, the  limit of (the identity map weighted by) $\Psi$ is given by  $$\lim\Psi=\sup_{\phi\in\mathcal{P} X}(\phi\ominus\Psi(\phi)).$$ 
	\end{example}
	
	Assigning to each quasi-metric space $(X,d)$ the Yoneda embedding ${\sf y}_X\colon (X,d)\longrightarrow (\mathcal{P}X,\rho_X)$ defines a natural transformation $${\sf y}\colon {\sf Id}\longrightarrow\mathcal{P}$$ from the identity functor to the presheaf functor $\mathcal{P}$.
	Assigning to each quasi-metric space $(X,d)$ the left adjoint $${\sf m}_X\colon(\mathcal{P}^2 X,\rho_{\mathcal{P} X})\longrightarrow(\mathcal{P} X,\rho_X), \quad \Phi\mapsto \inf_{\phi\in\mathcal{P} X}(\Phi(\phi)+\phi)$$ of the Yoneda embedding $${\sf y}_{\mathcal{P} X}\colon (\mathcal{P} X,\rho_X)\longrightarrow(\mathcal{P}^2 X,\rho_{\mathcal{P} X})$$ gives rise to a natural transformation $$\sf m\colon\mathcal{P}^2\longrightarrow\mathcal{P}.$$
	The triple $$\mathbb{P}=(\mathcal{P},{\sf m},{\sf y})$$ is a monad on the category $\sf qMet$, called the presheaf monad. The category of algebras of the monad  $(\mathcal{P},{\sf m},{\sf y})$ is the category of cocomplete (hence complete) quasi-metric spaces and non-expansive maps preserving colimits. The monad $\mathbb{P}=(\mathcal{P},{\sf m},{\sf y})$  is a Kock-Z\"{o}berlein type monad, or a lax-idempotent monad in the terminology of \cite{KL1997}. Kock-Z\"{o}berlein type monads are a kind of monads on 2-categories.  Besides the papers of Kock   and Z\"{o}berlein \cite{Kock,Zo76},  ideas and properties about such monads can be found in  \cite{Hof2013,KL1997}.

	A subfunctor of the presheaf functor $\mathcal{P}$ is a functor $\mathcal{T}\colon{\sf qMet}\longrightarrow{\sf qMet}$ such that \begin{itemize}\item for each quasi-metric space  $(X,d)$, $\mathcal{T}X$ is a subset of $\mathcal{P} X$ equipped with  $\rho_X$;   \item for each non-expansive map $f\colon (X,d_X)\to (Y,d_Y)$ and each $\phi\in\mathcal{T}X$, the weight $f^\rightarrow(\phi)$ belongs to $\mathcal{T}Y$ and $\mathcal{T}f(\phi)=f^\rightarrow(\phi)$. \end{itemize}
	
	\begin{definition} {\rm(\cite{Kostanek2010,Lai-Zhang2007,Waszkiewicz2009})} Consider the category  {\sf qMet} of quasi-metric spaces and non-expansive maps. \begin{enumerate}
			\item[\rm(i)] A class of weights  on {\sf qMet} is a subfunctor $\mathcal{T}\colon{\sf qMet}\longrightarrow{\sf qMet}$ of the presheaf functor $\mathcal{P}$ such that for each quasi-metric space $(X,d)$ and each $x\in X$, the representable weight $d(-,x)$ belongs to $\mathcal{T} X$. In this case, the Yoneda embedding factors through the inclusion natural transformation $\mathfrak{i}\colon \mathcal{T}\longrightarrow\mathcal{P} $.  
			
			\item[\rm(ii)]  A    class of weights  $\mathcal{T}\colon{\sf qMet}\longrightarrow{\sf qMet}$ is saturated provided that for  each quasi-metric space $(X,d)$, $\mathcal{T} X$ is closed in $\mathcal{P} X$ under formation of $\mathcal{T}$-colimits in the sense that for each $\Phi\in\mathcal{T}\mathcal{T} X$, the colimit of the inclusion map $\mathfrak{i}_X\colon \mathcal{T} X\longrightarrow \mathcal{P} X$ weighted by $\Phi$ belongs to $\mathcal{T} X$. In this case, $\colim_\Phi \mathfrak{i}_X$ is a colimit of the weight $\Phi$ of $(\mathcal{T} X,\rho_X)$.
	\end{enumerate}     \end{definition}
	
	Let $\mathcal{T}$ be a class of weights on $\sf qMet$. \begin{itemize} \item A quasi-metric space $(X,d)$ is  \emph{$\mathcal{T}$-cocomplete} if, for each non-expansive map $g\colon(Z,d_Z)\longrightarrow(X,d_X)$ and each $\xi\in\mathcal{T}Z$, the colimit of $g$ weighted by $\xi$ always exists. This is equivalent to that  each $\phi\in \mathcal{T}X$ has a colimit.  
		\item A non-expansive map $f\colon(X,d_X)\longrightarrow(Y,d_Y)$ \emph{preserves $\mathcal{T}$-colimits} if, for each non-expansive map $g\colon(Z,d_Z)\longrightarrow(X,d_X)$ and each $\xi\in\mathcal{T}Z$, it holds that $$f({\colim}_\xi g)= {\colim}_\xi (f\circ g)$$ whenever $\colim_\xi g$ exists. This is equivalent to that for each $\phi\in \mathcal{T}X$, $f(\colim \phi)$ is a colimit of $f$ weighted by $\phi$ whenever $\colim\phi$ exists.  
	\end{itemize}

	Let $\mathcal{T}$ be a class of weights on $\sf qMet$; let $\mathfrak{i} \colon \mathcal{T}  \longrightarrow \mathcal{P}  $ be the inclusion natural transformation; and let $\mathfrak{i} *\mathfrak{i}$ be the horizontal composite of $\mathfrak{i}$ with itself. Then, $\mathcal{T}$ is saturated if and only if the natural transformation ${\sf m}\circ(\mathfrak{i} *\mathfrak{i} )$ factors (uniquely) through $\mathfrak{i}$,  because 
	$$(\mathfrak{i} *\mathfrak{i} )_X(\Phi)=\Phi\circ \mathfrak{i}_X^* \quad \text{and}\quad  {\colim}_\Phi\mathfrak{i}_X = {\colim} (\Phi\circ \mathfrak{i}_X^*)$$ for all $\Phi\in\mathcal{T}\mathcal{T} X$. 
	\[\bfig \square [\mathcal{T}^2`\mathcal{T} `\mathcal{P}^2 `\mathcal{P} ; {\sf m}`\mathfrak{i} *\mathfrak{i} `\mathfrak{i}` {\sf m}] \efig\]   
	Therefore, a saturated class of weights is a submonad $$\mathbb{T}=(\mathcal{T},{\sf m},{\sf y} )$$ of the presheaf monad $\mathbb{P}=(\mathcal{P},{\sf m},{\sf y})$. We also use {\sf y} and  ${\sf m}$, respectively, to denote the  natural transformations ${\sf Id}\longrightarrow\mathcal{T}$ and $\mathcal{T}^2\longrightarrow\mathcal{T}$through which the unit and the multiplication of the presheaf monad factor, respectively. This won't lead to confusion.  
	
	For a saturated class of weights $\mathcal{T}$, the monad $\mathbb{T}=(\mathcal{T},{\sf m},{\sf y})$  is of Kock-Z\"{o}berlein type, since so is $\mathbb{P}=(\mathcal{P},{\sf m},{\sf y})$. Monads  of Kock-Z\"{o}berlein type have many pleasant properties (see e.g.  \cite{Hof2013,KL1997}), some of them are listed here in the form for $\mathbb{T}=(\mathcal{T},{\sf m},{\sf y})$:   \begin{enumerate} 
		\item[\rm(i)]  A quasi-metric space $(X,d)$ is a  $\mathbb{T}$-algebra, if and only if it is $\mathcal{T}$-cocomplete, if and only if the Yoneda embedding ${\sf y}\colon (X,d)\longrightarrow(\mathcal{T} X,\rho_X)$ (with codomain restricted to $(\mathcal{T} X,\rho_X)$) has a left adjoint. \item[\rm(ii)]  A non-expansive map between $\mathbb{T}$-algebras is a $\mathbb{T}$-homomorphism if and only if it preserve $\mathcal{T}$-colimits.  \item[\rm(iii)] For each quasi-metric space $(X,d)$, we have a string of adjunctions $$\mathcal{T}\mathsf{y}_X  \dashv\mathsf{m}_X\dashv\mathsf{y}_{\mathcal{T}X}\colon  (\mathcal{T}X,\rho_X)\longrightarrow (\mathcal{TT}X,\rho_{\mathcal{T}X}).$$\end{enumerate} 
	
	Typical submonads of $(\mathcal{P},{\sf m},{\sf y})$ include the formal ball monad \cite{Goubault2019}, the ideal monad \cite{Lai-Zhang2020}, the bounded ideal monad (to be introduced in the next section), and others. 
	
	For each quasi-metric space $(X,d)$, let $\mathcal{G}X$ be the subspace of $(\mathcal{P} X,\rho_X)$ consisting of weights of the form $r+d(-,x)$, $r>0$, $x\in X$. The assignemnt $(X,d)\mapsto (\mathcal{G}X,\rho_X)$ defines a saturated class of weights on $\sf qMet$, hence a submonad of $(\mathcal{P},{\sf m},{\sf y})$. The resulting monad is precisely the \emph{formal ball monad} of Goubault-Larrecq \cite{Goubault2019}. Since $$ \rho_X(r+d(-,x),s+d(-,y))= (s+d(x,y))\ominus r,$$  the ordered set $(\mathrm{B}X,\sqsubseteq)$ of formal balls is essentially the underlying ordered set of the quasi-metric space $(\mathcal{G}X,\rho_X)$.
	
	\begin{definition}{\rm(\cite{Lai-Zhang2020})} An ideal  of a quasi-metric space $(X,d)$ is a weight $\phi$ such that \begin{enumerate}
			\item[\rm (i)] $\inf_{x\in X} \phi(x)=0$;   
			\item[\rm (ii)]  $\rho_X(\phi,\lambda\wedge \mu)=\rho_X(\phi,\lambda)\wedge \rho_X(\phi, \mu)$ for all $\lambda,\mu \in \mathcal{P}X$.
	\end{enumerate}\end{definition}
	
	For each quasi-metric space $(X,d)$, let $\mathcal{I} X$ be the subspace of $(\mathcal{P} X,\rho_X)$ consisting of ideals of $(X,d)$. Then the assignment $(X,d)\mapsto (\mathcal{I} X,\rho_X)$ defines a saturated class of weights on $\sf qMet$, the corresponding monad $$\mathbb{I}=(\mathcal{I},{\sf m},{\sf y})$$ is called the \emph{ideal monad} on $\sf qMet$. The following two lemmas show that the category of $\mathbb{I}$-algebras is precisely the category of Yoneda complete quasi-metric spaces and Yoneda continuous maps.
	
	\begin{lemma} {\rm(\cite{Lai-Zhang2020})} \label{for-weight}
		For each weight $\phi$ of a quasi-metric space $(X,d)$, the following are equivalent:
		\begin{enumerate}
			\item[\rm (1)] $\phi$ is an ideal.
			\item[\rm (2)] $\inf_{x\in X} \phi(x)=0$ and $\mathrm{B}\phi\coloneqq \{(x,r) \mid \phi(x)<r\}$ is a directed set of $\mathrm{B}X$.
			\item[\rm (3)] There is a forward Cauchy net $\{x_i\}_{i\in D}$ of $(X,d)$ such that 
			$$\phi =\inf_{i\in D} \sup_{j\geq i} d(-,x_j).$$ 
		\end{enumerate}
	\end{lemma}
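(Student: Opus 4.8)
The plan is to prove the cyclic chain of implications $(1)\Rightarrow(2)\Rightarrow(3)\Rightarrow(1)$, so that the remaining implication $(2)\Rightarrow(1)$ comes for free. Of these, the two steps carrying real content are $(1)\Rightarrow(2)$, where the abstract ``flatness'' condition (ii) in the definition of an ideal must be turned into the concrete directedness of $\mathrm{B}\phi$, and $(3)\Rightarrow(1)$, where (ii) must be re-derived starting from a forward Cauchy net. The step $(2)\Rightarrow(3)$ is essentially bookkeeping with nets of formal balls, the order on $\mathrm{B}X$, and the Yoneda lemma $\rho_X(d(-,x),\phi)=\phi(x)$.

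For $(1)\Rightarrow(2)$: condition (i) already gives $\inf_{x\in X}\phi(x)=0$, which is the first half of (2) and shows $\mathrm{B}\phi\neq\emptyset$. For directedness, take $(a,r),(b,s)\in\mathrm{B}\phi$, so $\phi(a)<r$ and $\phi(b)<s$. The key move is to feed into (ii) the two test weights $\lambda\colon x\mapsto r\ominus d(a,x)$ and $\mu\colon x\mapsto s\ominus d(b,x)$ (a short triangle-inequality check shows these are genuine weights). Using the weight inequality for $\phi$ one computes $\inf_{x}(d(a,x)+\phi(x))=\phi(a)$, hence $\rho_X(\phi,\lambda)=r-\phi(a)>0$, and likewise $\rho_X(\phi,\mu)=s-\phi(b)>0$. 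Then (ii) forces $\rho_X(\phi,\lambda\wedge\mu)=\rho_X(\phi,\lambda)\wedge\rho_X(\phi,\mu)>0$, so there is $c\in X$ with $\lambda(c)>\phi(c)$ and $\mu(c)>\phi(c)$; unwinding $\ominus$ this reads $d(a,c)+\phi(c)<r$ and $d(b,c)+\phi(c)<s$. Choosing $t$ with $\phi(c)<t\leq\min\{r-d(a,c),\,s-d(b,c)\}$ exhibits $(c,t)\in\mathrm{B}\phi$ lying above both $(a,r)$ and $(b,s)$. I expect this to be the main obstacle: one has to hit on the weights $r\ominus d(a,-)$ and recognize that (ii) applied to them extracts precisely a common upper bound.

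For $(2)\Rightarrow(3)$: regard the directed set $\mathrm{B}\phi$ as a monotone net $\{(x_i,r_i)\}_{i\in D}$ indexed by itself, so $\{x_i\}_{i\in D}$ is forward Cauchy. From $\inf_{x}\phi(x)=0$ and directedness one gets $\inf_{i}r_i=0$, with the radii becoming arbitrarily small cofinally. Next one checks $\phi(w)=\inf_{i\in D}\bigl(d(w,x_i)+r_i\bigr)$ for every $w\in X$: ``$\leq$'' from $\phi(w)\leq\phi(x_i)+d(w,x_i)<r_i+d(w,x_i)$, and ``$\geq$'' because $(w,\phi(w)+\varepsilon)\in\mathrm{B}\phi$ for every $\varepsilon>0$. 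Finally, $(x_i,r_i)\sqsubseteq(x_j,r_j)$ gives $d(w,x_j)\leq d(w,x_i)+r_i$, so $\sup_{j\geq i}d(w,x_j)\leq d(w,x_i)+r_i$ and hence $\inf_i\sup_{j\geq i}d(w,x_j)\leq\phi(w)$; conversely, since $r_j\to 0$ cofinally while $\phi(w)\leq d(w,x_j)+r_j$ for all $j$, every tail supremum $\sup_{j\geq i}d(w,x_j)$ is $\geq\phi(w)$ (the case $\phi(w)=\infty$ being immediate). Thus $\phi=\inf_{i\in D}\sup_{j\geq i}d(-,x_j)$.

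For $(3)\Rightarrow(1)$: $\phi$ is a weight since each $d(-,x_j)$ is and weights are closed under $\sup$ and $\inf$. Condition (i): forward Cauchyness yields, for each $\varepsilon>0$, an index $i_0$ with $d(x_{i_0},x_j)<\varepsilon$ for all $j\geq i_0$, whence $\phi(x_{i_0})\leq\sup_{j\geq i_0}d(x_{i_0},x_j)\leq\varepsilon$, so $\inf_{x}\phi(x)=0$. For condition (ii), the inequality $\rho_X(\phi,\lambda\wedge\mu)\leq\rho_X(\phi,\lambda)\wedge\rho_X(\phi,\mu)$ is automatic since $\lambda\wedge\mu$ lies pointwise below both $\lambda$ and $\mu$. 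For the reverse, fix $t<\rho_X(\phi,\lambda)\wedge\rho_X(\phi,\mu)$ and choose witnesses $x_1,x_2$ with $\lambda(x_1)>\phi(x_1)+t$ and $\mu(x_2)>\phi(x_2)+t$; propagating these along the net via the weight inequalities for $\lambda$ and for $\mu$, together with $\phi(x_j)\leq\sup_{k\geq j}d(x_j,x_k)\to 0$, produces an index past which $\lambda(x_j)>\phi(x_j)+t$ and an index past which $\mu(x_j)>\phi(x_j)+t$; by directedness of $D$ a single $x_j$ beyond both witnesses $\rho_X(\phi,\lambda\wedge\mu)>t$. Letting $t\uparrow\rho_X(\phi,\lambda)\wedge\rho_X(\phi,\mu)$ completes (ii), and with it the cycle.
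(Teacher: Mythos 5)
The paper does not prove this lemma: it is imported verbatim from \cite{Lai-Zhang2020}, so there is no in-paper argument to compare yours against. Judged on its own, your cyclic proof $(1)\Rightarrow(2)\Rightarrow(3)\Rightarrow(1)$ is correct. The two substantive steps both work: in $(1)\Rightarrow(2)$ the test weights $r\ominus d(a,-)$ and $s\ominus d(b,-)$ are genuine weights, the identity $(r\ominus d(a,x))\ominus\phi(x)=r\ominus(d(a,x)+\phi(x))$ together with $\inf_x(d(a,x)+\phi(x))=\phi(a)$ gives $\rho_X(\phi,\lambda)=r-\phi(a)>0$, and flatness then produces the common refinement $(c,t)$. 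The only place where you are noticeably terse is the propagation step in $(3)\Rightarrow(1)$: to get $\lambda(x_j)>\phi(x_j)+t$ eventually you need not only $\phi(x_j)\to 0$ along the net but also that $d(x_1',x_j)$ is eventually below $\phi(x_1')+\delta/2$ (with $\delta=\lambda(x_1')-\phi(x_1')-t$), which follows from the defining formula $\phi(x_1')=\inf_i\sup_{j\geq i}d(x_1',x_j)$; combining this with the weight inequality $\lambda(x_j)\geq\lambda(x_1')-d(x_1',x_j)$ closes the gap, and directedness of $D$ then yields a single witness for both $\lambda$ and $\mu$. That detail is recoverable from what you wrote, so I would count the proof as complete up to routine verification.
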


	\begin{lemma} {\rm(Flagg,   S\"{u}nderhauf and  Wagner \cite{Flagg1996})} \label{colimit=yoneda limit}
		Suppose that $\{x_i\}_{i\in D} $ is a forward Cauchy net of a quasi-metric space $(X,d_X)$;  $\phi=\inf_{i\in D} \sup_{j\geq i} d_X(-,x_j)$ is the ideal generated by $\{x_i\}_{i\in D} $; and that $f\colon(X,d_X)\longrightarrow(Y,d_Y)$ is a non-expansive map. Then \begin{enumerate}
			\item[\rm (i)] $x\in X$ is a Yoneda limit of $\{x_i\}_{i\in D}$ if and only if it is a colimit of $\phi$.  \item[\rm (ii)] $f^\rightarrow(\phi)$ is the ideal generated by the forward Cauchy net $\{f(x_i)\}_{i\in D}$ of $(Y,d_Y)$.  \end{enumerate}
	\end{lemma}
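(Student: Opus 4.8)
The plan is to reduce both parts to direct computations with the $\inf$--$\sup$ expressions, the only auxiliary input being that the values $\phi(x_j)$ become arbitrarily small along the net. So I would first record the following consequence of the forward Cauchy condition $\inf_{i\in D}\sup_{k\geq j\geq i}d_X(x_j,x_k)=0$: for every $\varepsilon>0$ there is $i_0\in D$ such that $\phi(x_j)\leq\sup_{k\geq j}d_X(x_j,x_k)\leq\varepsilon$ for all $j\geq i_0$. This is the only place where the Cauchy hypothesis enters essentially.

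For (i), since $\colim\phi=b$ unfolds to $d_X(b,x)=\rho_X(\phi,d_X(-,x))=\sup_{z\in X}\big(d_X(z,x)\ominus\phi(z)\big)$ for all $x\in X$, while $b$ being a Yoneda limit of $\{x_i\}_{i\in D}$ unfolds to $d_X(b,x)=\inf_{i\in D}\sup_{j\geq i}d_X(x_j,x)$ for all $x\in X$, it suffices to prove
$$\rho_X(\phi,d_X(-,x))=\inf_{i\in D}\sup_{j\geq i}d_X(x_j,x)\qquad(x\in X).$$
Writing $t$ for the right-hand side, I would obtain ``$\leq$'' by checking $d_X(z,x)\leq\phi(z)+t$ for each $z$: given $\varepsilon>0$, use directedness of $D$ to pass an index beyond which both $\sup_{j\geq i}d_X(z,x_j)\leq\phi(z)+\varepsilon$ and $\sup_{j\geq i}d_X(x_j,x)\leq t+\varepsilon$ hold, then apply the triangle inequality through any $x_j$ with $j$ large and let $\varepsilon\to0$. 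I would obtain ``$\geq$'' by testing the defining supremum at $z=x_j$ with $j\geq i_0$ (from the auxiliary fact), using $d_X(x_j,x)\ominus\phi(x_j)\geq d_X(x_j,x)\ominus\varepsilon$ together with $\sup_{j\geq i_0}d_X(x_j,x)\geq t$, and letting $\varepsilon\to0$. The cases $t=\infty$ or $\phi(z)=\infty$ are handled by the same estimates read against arbitrarily large bounds.

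For (ii), non-expansiveness gives $\inf_{i}\sup_{k\geq j\geq i}d_Y(f(x_j),f(x_k))\leq\inf_{i}\sup_{k\geq j\geq i}d_X(x_j,x_k)=0$, so $\{f(x_i)\}_{i\in D}$ is forward Cauchy and $\psi\coloneqq\inf_{i\in D}\sup_{j\geq i}d_Y(-,f(x_j))$ is its generated ideal (Lemma \ref{for-weight}). Since $f^{\rightarrow}(\phi)=\phi\circ f^{*}$, that is $f^{\rightarrow}(\phi)(y)=\inf_{x\in X}\big(\phi(x)+d_Y(y,f(x))\big)$, it remains to prove $f^{\rightarrow}(\phi)(y)=\psi(y)$ for all $y\in Y$. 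For ``$\geq$'', for each fixed $x$ I would pick $i$ with $\sup_{k\geq i}d_X(x,x_k)\leq\phi(x)+\varepsilon$ and apply $d_Y(y,f(x_j))\leq d_Y(y,f(x))+d_X(x,x_j)$ for $j\geq i$ to get $\psi(y)\leq\phi(x)+d_Y(y,f(x))+\varepsilon$, then let $\varepsilon\to0$ and take the infimum over $x$. For ``$\leq$'', given $\varepsilon>0$ I would choose an index beyond which simultaneously $\phi(x_k)\leq\varepsilon$ and $d_Y(y,f(x_k))\leq\psi(y)+\varepsilon$ (the latter because $d_Y(y,f(x_k))$ lies in a tail whose supremum is within $\varepsilon$ of $\psi(y)$), and evaluate the defining infimum of $f^{\rightarrow}(\phi)(y)$ at $x=x_k$.

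The computations are routine $\varepsilon$-chases; the one point that needs care is that the tails $\sup_{j\geq i}(\cdots)$ need not be monotone in $i$, so one must argue throughout with infima over \emph{all} indices and invoke directedness of $D$ whenever two smallness conditions must be met at once (as in the ``$\leq$'' half of (ii)). I do not anticipate a conceptual obstacle beyond this bookkeeping. One could alternatively derive the formula $\rho_X(\phi,\psi)=\inf_{i\in D}\sup_{j\geq i}\psi(x_j)$ for every weight $\psi$ of $X$ --- of which part (i) is the case $\psi=d_X(-,x)$ --- and feed it through the adjunction $f^{\rightarrow}\dashv f^{\leftarrow}$, but the direct route above seems shorter.
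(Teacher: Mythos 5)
Your argument is correct: the reduction of (i) to the identity $\rho_X(\phi,d_X(-,x))=\inf_{i\in D}\sup_{j\geq i}d_X(x_j,x)$ is exactly what the definitions of colimit and Yoneda limit require, the $\varepsilon$-estimates in both directions (and in both halves of (ii)) go through, and the only substantive use of the forward Cauchy condition --- that $\phi(x_j)$ is eventually below any $\varepsilon$ --- is correctly isolated at the start. The paper itself gives no proof of this lemma (it is quoted from Flagg, S\"{u}nderhauf and Wagner), so there is nothing to compare against; your direct verification, or equivalently the cleaner formula $\rho_X(\phi,\psi)=\inf_{i\in D}\sup_{j\geq i}\psi(x_j)$ you mention at the end, is the standard route. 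One tiny quibble: the tails $\sup_{j\geq i}(\cdots)$ \emph{are} antitone in $i$ over a directed index set, so your cautionary remark is unnecessary, though harmless.
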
 
	
	\begin{remark} Lemma \ref{for-weight} shows that ideals of quasi-metric spaces are precisely the weights generated by forward Cauchy nets.  Such weights have been introduced in the quantale-enriched setting in different forms,  see e.g. \cite{Flagg1996,Vickers2005,Wagner1997}. In particular, for quasi-metric spaces, these ideals coincide with the \emph{flat left modules} of Vickers \cite{Vickers2005}. For a comparison of various generalizations of ideals of partially ordered sets in the quantale-enriched setting, see \cite{Lai-Zhang2020}. \end{remark}

	\section{The bounded ideal monad} 
	
	\begin{definition}
		Suppose $(X,d)$ is a quasi-metric space.  A weight  $\phi$  of $(X,d)$ is  bounded if $$\inf_{x\in X}\rho_X(\phi, d(-,x))<\infty.$$ 
	\end{definition}
	
	In other words,   $\phi$ is  bounded if   $ r+\phi\geq d(-,a) $ for some $r<\infty$ and some $a\in X$. 
	Every weight $\phi$  with a colimit is bounded, for if $a$ is a colimit of $\phi$, then $0=d(a,a)= \rho_X(\phi,d(-,a))$, hence $\phi\geq d(-,a)$. 
	
	\begin{proposition}\label{B is saturated} Bounded weights form a saturated class of weights on the category of quasi-metric spaces and non-expansive maps. \end{proposition}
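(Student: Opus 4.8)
The plan is to verify, one by one, the three conditions defining a saturated class of weights for the assignment $\mathcal{B}$ that sends a quasi-metric space $(X,d)$ to the subspace of $(\mathcal{P}X,\rho_X)$ formed by its bounded weights: that $\mathcal{B}$ is a subfunctor of $\mathcal{P}$, that it contains every representable weight, and that each $\mathcal{B}$-colimit of elements of $\mathcal{B}X$ computed in $\mathcal{P}X$ again lies in $\mathcal{B}X$. Throughout I would use the reformulation noted right after the definition: a weight $\phi$ of $(X,d)$ is bounded if and only if $r+\phi\geq d(-,a)$ for some $a\in X$ and some $r<\infty$.

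The first two conditions are immediate. For representability, $0+d(-,x)\geq d(-,x)$ shows $d(-,x)\in\mathcal{B}X$ (equivalently $\rho_X(d(-,x),d(-,x))=d(x,x)=0<\infty$ by the Yoneda lemma). For the subfunctor property it is enough to check that $f^\rightarrow$ carries $\mathcal{B}X$ into $\mathcal{B}Y$ for every non-expansive $f\colon(X,d_X)\to(Y,d_Y)$, since $\mathcal{B}f$ is then forced to be the restriction of $f^\rightarrow$ and functoriality passes down from $\mathcal{P}$. Given $\phi\in\mathcal{B}X$ with $r+\phi\geq d_X(-,a)$, I would expand $f^\rightarrow(\phi)(y)=\inf_{x\in X}\big(\phi(x)+d_Y(y,f(x))\big)$ and combine the bound on $\phi$ with non-expansiveness, $d_X(x,a)\geq d_Y(f(x),f(a))$, and the triangle inequality in $Y$, to obtain $\phi(x)+d_Y(y,f(x))\geq d_Y(y,f(a))-r$ for every $x$; the infimum over $x$ then yields $r+f^\rightarrow(\phi)\geq d_Y(-,f(a))$, i.e. $f^\rightarrow(\phi)\in\mathcal{B}Y$.

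The real work is saturation. Fix $(X,d)$ and $\Phi\in\mathcal{B}\mathcal{B}X$, a bounded weight of $(\mathcal{B}X,\rho_X)$. From $\colim_\Phi\mathfrak{i}_X=\colim(\Phi\circ\mathfrak{i}_X^*)$ together with the formula for colimits in $(\mathcal{P}X,\rho_X)$ and the identity $\inf_{\lambda\in\mathcal{P}X}\big(\rho_X(\lambda,\phi)+\lambda\big)=\phi$, one identifies $\colim_\Phi\mathfrak{i}_X$ with $\mu:=\inf_{\phi\in\mathcal{B}X}\big(\Phi(\phi)+\phi\big)$, and the task is to produce $a\in X$ and $s<\infty$ with $s+\mu\geq d(-,a)$. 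The key idea is a nested application of boundedness: first, boundedness of $\Phi$ gives $\phi_0\in\mathcal{B}X$ and $r<\infty$ with $r+\Phi(\phi)\geq\rho_X(\phi,\phi_0)$ for all $\phi\in\mathcal{B}X$; second, boundedness of $\phi_0$ itself gives $a\in X$ and $t<\infty$ with $t+\phi_0\geq d(-,a)$. For any $\phi\in\mathcal{B}X$ and $x\in X$, from $\rho_X(\phi,\phi_0)\geq\phi_0(x)\ominus\phi(x)$ we get $r+\Phi(\phi)+\phi(x)\geq\big(\phi_0(x)\ominus\phi(x)\big)+\phi(x)\geq\phi_0(x)$, hence $(r+t)+\Phi(\phi)+\phi(x)\geq t+\phi_0(x)\geq d(x,a)$; taking the infimum over $\phi\in\mathcal{B}X$ gives $(r+t)+\mu\geq d(-,a)$, so $\mu\in\mathcal{B}X$.

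I expect the only delicate points to be the identification of $\colim_\Phi\mathfrak{i}_X$ with the explicit expression $\inf_{\phi\in\mathcal{B}X}(\Phi(\phi)+\phi)$ — routine once the description of colimits in the presheaf space is in hand — and, within the saturation step, recognizing that the witness $\phi_0$ extracted from boundedness of $\Phi$ has to be fed back into the definition of boundedness a second time; this two-layered use of the hypothesis is the crux of the argument, and everything else is elementary arithmetic in $[0,\infty]$.
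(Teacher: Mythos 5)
Your proposal is correct and follows essentially the same route as the paper: the subfunctor step via $r+\phi\geq d_X(-,a)\Rightarrow r+f^\rightarrow(\phi)\geq d_Y(-,f(a))$, and the saturation step via the same two-layered use of boundedness (first of $\Phi$, then of the witness $\psi=\phi_0$), differing only in that you unwind $\rho_X(\phi,\phi_0)\geq\phi_0(x)\ominus\phi(x)$ pointwise where the paper invokes $\inf_{\phi}(\rho_X(\phi,\psi)+\phi)=\psi$ directly. No gaps.
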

	
	\begin{proof} First, we show that bounded weights form a class of weights. That means, if $f\colon(X,d_X)\longrightarrow(Y,d_Y)$ is a non-expansive map and $\phi$ is a bounded weight of $(X,d_X)$, then $f^\rightarrow(\phi)$ is a bounded weight of $(Y,d_Y)$. This follows from the fact that if $r+\phi\geq d_X(-,a) $, then $r+f^\rightarrow(\phi)\geq d_Y(-,f(a)) $.
		
		Second, we show that the class of bounded weights is saturated.  For each quasi-metric space $(X,d)$, let $(\mathcal{B}X,\rho_X)$ be the subspace of $(\mathcal{P} X,\rho_X)$, where $\mathcal{B}X$ is the set of all bounded weights of $(X,d)$.  We need to show that for each bounded weight $\Phi$ of $(\mathcal{B}X,\rho_X)$, the colimit $\colim_\Phi \mathfrak{i}_X$ of the inclusion map $\mathfrak{i}_X\colon\mathcal{B}X\longrightarrow\mathcal{P} X$ weighted by $\Phi$ is a bounded weight of $(X,d)$. 
		
		A direct calculation gives that $${\colim}_\Phi \mathfrak{i}_X=\inf_{\phi\in\mathcal{B}X}(\Phi(\phi)+\phi).$$ Since $\Phi$ is bounded, there exist $\psi\in \mathcal{B}X$ and $r<\infty$ such that 
		$r+\Phi\geq \rho_X(-,\psi)$. Since $\psi$ is bounded,   there exist  $a\in X$ and $s<\infty$ such that $s+\psi\geq d(-,a)$. Then 
		\begin{align*} s+ r +{\colim}_\Phi \mathfrak{i}_X&=s+r+\inf_{\phi\in\mathcal{B}X}(\Phi(\phi)+\phi)\\ &= s+ \inf_{\phi\in\mathcal{B}X}(r+\Phi(\phi)+\phi) \\ 
			&\geq s+\inf_{\phi\in\mathcal{B}X}(\rho_X(\phi,\psi)+\phi)\\ 
			&=s+\psi\\
			&\geq d(-,a), \end{align*} hence $\colim_\Phi \mathfrak{i}_X$ is a bounded weight of $(X,d)$.
	\end{proof}
	
	\begin{definition}
		A weight $\phi$ of a quasi-metric space $(X,d)$ is called a bounded ideal if it is at the same time an ideal and a bounded weight of $(X,d)$.    
	\end{definition} 
	
	We write $(\mathcal{J}X,\rho_X)$ for the subspace of $(\mathcal{P} X,\rho_X)$ consisting of bounded ideals. It is clear that if $f\colon(X,d_X)\longrightarrow(Y,d_Y)$ is a non-expansive map and $\phi$ is a bounded ideal of $(X,d_X)$, then $f^\rightarrow(\phi)$ is a bounded ideal of $(Y,d_Y)$. So, assigning to each quasi-metric space $(X,d)$ the space $(\mathcal{J}X,\rho_X)$ of bounded ideals defines  a class of weights $\mathcal{J}\colon\sf qMet \longrightarrow qMet$.
	
	\begin{proposition} \label{saturated}
		Bounded ideals form a saturated class of weights on the category of quasi-metric spaces and non-expansive maps. 
	\end{proposition}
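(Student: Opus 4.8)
The plan is to exploit the decomposition $\mathcal{J}=\mathcal{I}\cap\mathcal{B}$, i.e. for every quasi-metric space $(X,d)$ one has $\mathcal{J}X=\mathcal{I}X\cap\mathcal{B}X$ as subspaces of $(\mathcal{P}X,\rho_X)$, and to show in general that the intersection of two saturated classes of weights is again saturated. This is efficient here because saturation of $\mathcal{I}$ (the ideal monad) and of $\mathcal{B}$ (Proposition \ref{B is saturated}) are already available, and because saturation is tested by a single weighted colimit which can be computed inside either of the two larger subspaces. Concretely, fix $(X,d)$ and a bounded ideal $\Phi$ of $(\mathcal{J}X,\rho_X)$, so $\Phi\in\mathcal{J}\mathcal{J}X$; I must show $\colim_\Phi\mathfrak{i}_X\in\mathcal{J}X$, where $\mathfrak{i}_X\colon(\mathcal{J}X,\rho_X)\longrightarrow(\mathcal{P}X,\rho_X)$ is the inclusion.

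The key step is the following identity. Writing $j\colon(\mathcal{J}X,\rho_X)\longrightarrow(\mathcal{B}X,\rho_X)$ and $\mathfrak{i}^{\mathcal{B}}_X\colon(\mathcal{B}X,\rho_X)\longrightarrow(\mathcal{P}X,\rho_X)$ for the inclusions, so that $\mathfrak{i}_X=\mathfrak{i}^{\mathcal{B}}_X\circ j$, I would use $\colim_\phi f=\colim(\phi\circ f^*)$ (immediate from the definition of weighted colimit, since $1_X^*$ is the unit for composition of distributors) together with the contravariant functoriality of cographs $(\mathfrak{i}^{\mathcal{B}}_X\circ j)^*=j^*\circ(\mathfrak{i}^{\mathcal{B}}_X)^*$ (a routine one-line check from the definition of cograph) and the fact that $j^\rightarrow(\Phi)=\Phi\circ j^*$, to conclude
\[
\colim{}_\Phi\mathfrak{i}_X=\colim\big(\Phi\circ\mathfrak{i}_X^*\big)=\colim\big((j^\rightarrow\Phi)\circ(\mathfrak{i}^{\mathcal{B}}_X)^*\big)=\colim{}_{j^\rightarrow\Phi}\mathfrak{i}^{\mathcal{B}}_X .
\]
Since $\Phi$ is in particular a bounded weight of $(\mathcal{J}X,\rho_X)$ and bounded weights form a class of weights (first half of Proposition \ref{B is saturated}), $j^\rightarrow(\Phi)\in\mathcal{B}\mathcal{B}X$, so by saturation of $\mathcal{B}$ the right-hand side lies in $\mathcal{B}X$; hence $\colim_\Phi\mathfrak{i}_X$ is a bounded weight of $(X,d)$. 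Running the identical argument with $\mathcal{I}$ in place of $\mathcal{B}$ — using that $\Phi$ is also an ideal of $(\mathcal{J}X,\rho_X)$ and that ideals form a class of weights — gives $\colim_\Phi\mathfrak{i}_X\in\mathcal{I}X$. Therefore $\colim_\Phi\mathfrak{i}_X\in\mathcal{I}X\cap\mathcal{B}X=\mathcal{J}X$, which is exactly what saturation of $\mathcal{J}$ demands.

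As a backup (and in the more hands-on style of Proposition \ref{B is saturated}) one can instead start from $\colim_\Phi\mathfrak{i}_X=\inf_{\phi\in\mathcal{J}X}(\Phi(\phi)+\phi)$: boundedness then follows by the very same two-step estimate as in Proposition \ref{B is saturated}, and the ideal conditions $\inf_{x\in X}(\colim_\Phi\mathfrak{i}_X)(x)=0$ and the lattice identity $\rho_X(\colim_\Phi\mathfrak{i}_X,\lambda\wedge\mu)=\rho_X(\colim_\Phi\mathfrak{i}_X,\lambda)\wedge\rho_X(\colim_\Phi\mathfrak{i}_X,\mu)$ can be pushed through using $\inf_{\psi\in\mathcal{B}X}(\rho_X(\psi,\phi)+\psi)=\phi$ and the corresponding fact for $\mathcal{I}X$. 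I expect the only genuine subtlety to be making the reduction in the first route precise — namely checking that the pushed-forward weights $j^\rightarrow(\Phi)$ and its $\mathcal{I}$-analogue really land in $\mathcal{B}\mathcal{B}X$ and $\mathcal{I}\mathcal{I}X$ respectively, and that the two colimits genuinely coincide; the hands-on route avoids this at the cost of the slightly delicate direct verification of condition (ii) in the definition of an ideal, which is why I would prefer the conceptual reduction.
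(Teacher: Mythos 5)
Your primary route is correct, and it is genuinely different from the paper's. The paper simply computes $\colim_\Phi\mathfrak{i}_X=\inf_{\phi\in\mathcal{J}X}(\Phi(\phi)+\phi)$ and then verifies boundedness by the same two-step estimate as in Proposition \ref{B is saturated} and the ideal axioms as in the cited work of Lai--Zhang -- i.e.\ it takes exactly your ``backup'' route, with the delicate verification of condition (ii) outsourced to a reference. Your main argument instead proves the general fact that a pointwise intersection of saturated classes is saturated, and all the ingredients check out: the cograph identity $(\mathfrak{i}^{\mathcal{B}}_X\circ j)^*=j^*\circ(\mathfrak{i}^{\mathcal{B}}_X)^*$ is the standard contravariance of $(-)^*$, the colimits are all computed in the cocomplete space $(\mathcal{P}X,\rho_X)$ so existence is never in question, and $j^\rightarrow(\Phi)\in\mathcal{B}\mathcal{B}X$ (resp.\ the $\mathcal{I}$-analogue) follows from functoriality of the class $\mathcal{B}$ (resp.\ $\mathcal{I}$) applied to the isometric inclusion $j$. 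What your route buys is that you never have to touch the ideal condition $\rho_X(\phi,\lambda\wedge\mu)=\rho_X(\phi,\lambda)\wedge\rho_X(\phi,\mu)$ at all -- the hardest part of the hands-on verification -- and you get a reusable lemma about intersections of saturated classes. What it costs is a dependence on the saturation of the ideal class $\mathcal{I}$, which the paper also only quotes from Lai--Zhang rather than proves; so in terms of what is actually verified from scratch, the two approaches lean on the same external input, but yours packages the remaining work more conceptually. One presentational nitpick: state explicitly that the intersection $\mathcal{J}=\mathcal{I}\cap\mathcal{B}$ is a class of weights (contains representables and is functorial) before invoking saturation -- the paper does this in the sentence preceding the proposition, and your argument needs it to even form $\mathcal{J}\mathcal{J}X$.
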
 
	
	\begin{proof} It suffices to show that for each bounded ideal $\Phi$ of $(\mathcal{J}X,\rho_X)$, the colimit $\colim_\Phi \mathfrak{i}_X$ of the inclusion map $\mathfrak{i}_X\colon\mathcal{J}X\longrightarrow\mathcal{P} X$ weighted by $\Phi$ is a bounded ideal of $(X,d)$. Since $${\colim}_\Phi \mathfrak{i}_X=\inf_{\phi\in\mathcal{J}X}(\Phi(\phi)+\phi),$$ the proof proceeds in the same way as that in Proposition \ref{B is saturated} and \cite[Proposition 4.4]{Lai-Zhang2020}, so we omit the details here. \end{proof}
	
	Write $$\mathbb{J}=(\mathcal{J},{\sf m},{\sf y})$$ for the submonad of the presheaf monad that corresponds to the class of bounded ideals. In order to characterize $\mathbb{J}$-algebras, we need the notion of local dcpos introduced in Mislove \cite{Mislove1999}.
	A partially ordered set $P$ is a \emph{local dcpo}  if each directed set of $P$ with an upper bound has a join. 
	
	\begin{theorem} \label{J-algebra}
		A quasi-metric space $(X,d)$ is a $\mathbb{J}$-algebra if and only if $(X,d)$ is standard and $(\mathrm{B}X,\sqsubseteq)$ is a local dcpo.
	\end{theorem}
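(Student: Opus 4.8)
The plan is to use the Kock--Z\"{o}berlein description of the algebras: $(X,d)$ is a $\mathbb{J}$-algebra if and only if it is $\mathcal{J}$-cocomplete, i.e.\ every bounded ideal of $(X,d)$ has a colimit. Throughout I will move freely between a bounded ideal $\phi$, its directed set $\mathrm{B}\phi=\{(x,r)\mid\phi(x)<r\}$ of formal balls (Lemma \ref{for-weight}), a forward Cauchy net generating $\phi$, and the colimit of $\phi$, using Lemma \ref{colimit=yoneda limit}(i) to translate ``$\phi$ has a colimit'' into ``a net generating $\phi$ has a Yoneda limit''. Everything rests on one bookkeeping lemma, proved by a direct computation with the defining inequality of $\sqsubseteq$ on formal balls: \emph{if $\{(x_i,r_i)\}_{i\in D}$ is a directed set of $\mathrm{B}X$ and $s_0=\inf_i r_i$, then the forward Cauchy net $\{x_i\}_{i\in D}$ generates precisely the ideal $\inf_i\big(r_i-s_0+d(-,x_i)\big)$}. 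Applied to $\mathrm{B}\phi$ itself (where $s_0=0$) this says the net underlying $\mathrm{B}\phi$ generates $\phi$.

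For the ``only if'' direction, let $\{(x_i,r_i)\}_{i\in D}$ be a directed set of $\mathrm{B}X$ with an upper bound $(a,s)$; put $s_0=\inf_i r_i<\infty$ and $\phi=\inf_i\big(r_i-s_0+d(-,x_i)\big)$. By the bookkeeping lemma $\phi$ is an ideal generated by $\{x_i\}_{i\in D}$, and it is bounded, since $d(x_i,a)\le r_i-s$ for all $i$ gives $d(-,a)\le\phi+s_0$. As $(X,d)$ is a $\mathbb{J}$-algebra, $\phi$ has a colimit $b$, which by Lemma \ref{colimit=yoneda limit}(i) is a Yoneda limit of $\{x_i\}_{i\in D}$; then Lemma \ref{join of directed set in BX} shows $(b,s_0)$ is a join of $\{(x_i,r_i)\}_{i\in D}$. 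This proves simultaneously that $\mathrm{B}X$ is a local dcpo and --- since a join is in particular an upper bound --- that $(X,d)$ is standard in the sense of Definition \ref{defn of standard}.

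For the ``if'' direction, let $\phi$ be a bounded ideal and fix $a\in X$ and $r<\infty$ with $d(-,a)\le r+\phi$. The crucial observation is that the upward shift $S=\{(x,t+r)\mid(x,t)\in\mathrm{B}\phi\}$ of $\mathrm{B}\phi$ is again a directed set of $\mathrm{B}X$, order-isomorphic to $\mathrm{B}\phi$ via a bijection that leaves the first coordinate fixed, and that $(a,0)$ is an upper bound of $S$: if $\phi(x)<t$ then $d(x,a)\le r+\phi(x)<t+r$. Since $\mathrm{B}X$ is a local dcpo, $S$ has a join; since $(X,d)$ is standard, the net underlying $S$ --- which is the net underlying $\mathrm{B}\phi$ up to this reindexing, hence generates $\phi$ --- has a Yoneda limit, and by Lemma \ref{colimit=yoneda limit}(i) this is a colimit of $\phi$. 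Hence every bounded ideal has a colimit and $(X,d)$ is $\mathcal{J}$-cocomplete, i.e.\ a $\mathbb{J}$-algebra.

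The conceptual heart of the argument --- and the step I expect to be most delicate --- is the translation between bounded ideals and directed sets of formal balls: boundedness of $\phi$ with constant $r$ is exactly what makes the rigid shift of $\mathrm{B}\phi$ by $r$ acquire an upper bound $(a,0)$ in $\mathrm{B}X$, while subtracting $\inf_i r_i$ conversely turns a bounded directed set of formal balls into a bounded ideal without changing the underlying net. Once this is in place the theorem follows from Lemmas \ref{join of directed set in BX}, \ref{for-weight}, \ref{colimit=yoneda limit} and the Kock--Z\"{o}berlein property; the only genuinely computational point is the bookkeeping lemma, where the strict inequalities defining $\mathrm{B}\phi$ and the fact that the infimum of the radii of a directed set of $\mathrm{B}X$ is attained on every tail of the net must be handled carefully.
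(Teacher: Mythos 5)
Your proposal is correct and follows essentially the same route as the paper: both directions hinge on the same translation between a bounded ideal $\phi$ and the shifted directed set $\{(x_i,t+r_i)\}$ of formal balls with upper bound $(a,0)$, combined with Lemmas \ref{join of directed set in BX}, \ref{for-weight} and \ref{colimit=yoneda limit}. The only cosmetic difference is in the necessity direction, where you bound $\phi$ using $s_0=\inf_i r_i$ while the paper uses an upper bound of the radii (passing to a tail); your bookkeeping lemma identifying $\inf_i\bigl(r_i-s_0+d(-,x_i)\bigr)$ with the ideal generated by $\{x_i\}_{i\in D}$ is correct and matches the paper's use of $\phi=\inf_{i}\sup_{j\geq i}d(-,x_j)$.
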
 
	
	\begin{proof} We prove the necessity first. 
		Let $(X,d)$ be a $\mathbb{J}$-algebra.  
		
		Suppose $\{(x_i,r_i)\}_{i\in D}$ is a directed set of $\mathrm{B}X$ with an upper bound, say $(a,t)$. Consider the forward Cauchy net $\{x_i\}_{i\in D}$ and the ideal $$\phi=\inf_{i\in D}\sup_{j\geq i} d(-,x_j) $$ of $(X,d)$.
		We claim that  $\phi $ is  bounded. Since $(a,t) $ is an upper bound of $\{(x_i,r_i)\}_{i\in D}$, we have $r_i\geq t+d(x_i,a)\geq d(x_i,a)$ for all $i\in D$.  We may assume that $\{r_i\}_{i\in D}$ has an upper bound, say $G$; otherwise take some $k\in D$ and consider the directed set $\{(x_i,r_i)\}_{i\geq k}$. Then \begin{align*} G+ \phi &= \inf_{i\in D}\sup_{j\geq i} (G+d(-,x_j))\\ &\geq \inf_{i\in D}\sup_{j\geq i} (d(x_j,a)+d(-,x_j)) \\ &\geq d(-,a),\end{align*} so $\phi$ is bounded.
		Since $(X,d)$ is a $\mathbb{J}$-algebra, $\phi$ has a colimit, then the forward Cauchy net $\{x_i\}_{i\in D}$ has a Yoneda limit by Lemma \ref{colimit=yoneda limit}, hence the directed set $\{(x_i,r_i)\}_{i\in D}$ has a join by Lemma \ref{join of directed set in BX}.
		This proves that $(\mathrm{B}X,\sqsubseteq)$ is a local dcpo.	
		
		To see that $(X,d)$ is standard, assume that $\{(x_i,r_i)\}_{i\in D}$ is a directed set of $(\mathrm{B}X,\sqsubseteq)$ that has a join. The above argument shows that the ideal  $$\phi=\inf_{i\in D}\sup_{j\geq i} d(-,x_j) $$ of $(X,d)$ is bounded, thus it has a colimit, which is by Lemma \ref{colimit=yoneda limit} a Yoneda limit of $\{x_i\}_{i\in D}$. 
		
		Now we prove the sufficiency. Let $(X,d)$ be a standard quasi-metric space such that $(\mathrm{B}X,\sqsubseteq)$ is a local dcpo. We wish to show that $(X,d)$ is a $\mathbb{J}$-algebra; that is, each bounded ideal $\phi$ of $(X,d)$ has a colimit. 
		
		Since $\phi$ is bounded, there exist $a\in X$ and $t<\infty$ such that $t+\phi\geq d(-,a)$. Since $\phi$ is an ideal, $$\mathrm{B}\phi\coloneqq \{(x,r) \mid \phi(x)<r\}$$ is a directed set of $\mathrm{B}X$  by Lemma \ref{for-weight}. We index $\mathrm{B}\phi$ by itself as $\{(x_i,r_i)\}_{i\in D}$. It is clear that $\{(x_i,t+r_i)\}_{i\in D}$ is a directed set of $\mathrm{B}X$. Since for all $i\in D$, $$t+r_i>t+\phi(x_i)\geq d(x_i,a),$$ it follows that $(x_i,t+r_i)\sqsubseteq(a,0)$, hence   $(a,0)$ is an upper bound of $\{(x_i,t+r_i)\}_{i\in D}$.  Since $\mathrm{B}X$ is a local dcpo, $\{(x_i,t+r_i)\}_{i\in D}$ has a join, say $(b,s)$. Since $(X,d)$ is standard, then $b$ is a Yoneda limit of the forward Cauchy net $\{x_i\}_{i\in D}$. Finally, since $$\phi=\inf_{i\in D}\sup_{j\geq i} d(-,x_j), $$ it follows that $b$ is a colimit of $\phi$ by Lemma \ref{colimit=yoneda limit}.  \end{proof} 
	
	There is a characterization of $\mathbb{J}$-algebras by forward Cauchy nets. A forward Cauchy net  $\{x_i\}_{i\in D}$ of a quasi-metric space $(X,d)$ is \emph{bounded} provided that there exist some $a\in X$ and $r<\infty$ such that for each $x\in X$ and each $i\in D$ there is some $j\geq i$ for which $r+d(x,x_j)\geq d(x,a)$; or equivalently, the ideal  $ \phi\coloneqq \inf_{i\in D}\sup_{j\geq i}d(-,x_j)$  is bounded. From the argument of Theorem \ref{J-algebra} one easily deduces the following:
	
	\begin{proposition} A quasi-metric space is a $\mathbb{J}$-algebra if and only if each of its bounded forward Cauchy nets has a Yoneda limit. \end{proposition}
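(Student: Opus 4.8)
The plan is to reduce the statement to the Kock-Z\"{o}berlein machinery already recorded in the excerpt, rather than to re-run the formal-ball computations of Theorem \ref{J-algebra}. Since $\mathbb{J}=(\mathcal{J},{\sf m},{\sf y})$ comes from a saturated class of weights (Proposition \ref{saturated}), it is of Kock-Z\"{o}berlein type, so a quasi-metric space $(X,d)$ is a $\mathbb{J}$-algebra if and only if it is $\mathcal{J}$-cocomplete, i.e.\ if and only if every bounded ideal of $(X,d)$ has a colimit. Hence it suffices to show that every bounded ideal of $(X,d)$ has a colimit precisely when every bounded forward Cauchy net of $(X,d)$ has a Yoneda limit.

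To build the bridge between the two sides I would invoke Lemmas \ref{for-weight} and \ref{colimit=yoneda limit}. By Lemma \ref{for-weight}(3) the ideals of $(X,d)$ are exactly the weights of the form $\phi=\inf_{i\in D}\sup_{j\geq i}d(-,x_j)$ for a forward Cauchy net $\{x_i\}_{i\in D}$; and, by definition, such a net is bounded exactly when the ideal $\phi$ it generates is bounded. So the bounded ideals are precisely the ideals generated by bounded forward Cauchy nets. Furthermore, by Lemma \ref{colimit=yoneda limit}(i), an element $b\in X$ is a colimit of $\phi$ if and only if it is a Yoneda limit of the net $\{x_i\}_{i\in D}$ generating $\phi$.

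Given these two observations the equivalence is essentially bookkeeping. For necessity: assuming $(X,d)$ is a $\mathbb{J}$-algebra, take a bounded forward Cauchy net $\{x_i\}_{i\in D}$; its ideal $\phi$ is bounded, hence has a colimit $b$, and $b$ is then a Yoneda limit of $\{x_i\}_{i\in D}$. For sufficiency: assuming every bounded forward Cauchy net of $(X,d)$ has a Yoneda limit, take a bounded ideal $\phi$ and choose via Lemma \ref{for-weight}(3) a forward Cauchy net generating it; this net is bounded, so has a Yoneda limit $b$, which is a colimit of $\phi$. Thus $(X,d)$ is $\mathcal{J}$-cocomplete, hence a $\mathbb{J}$-algebra.

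I do not expect a serious obstacle here; the one place to be careful is the interplay of the two meanings of ``bounded'', namely to confirm that boundedness of a forward Cauchy net really is a property of the ideal it generates, and that Lemma \ref{for-weight}(3) does furnish, for every bounded ideal, a (necessarily bounded) generating net --- both are immediate from the definitions. For readers who prefer to stay closer to Theorem \ref{J-algebra}, one can instead verify directly that ``$(X,d)$ standard and $\mathrm{B}X$ a local dcpo'' is equivalent to ``every bounded forward Cauchy net has a Yoneda limit'' by combining Lemma \ref{join of directed set in BX} with Lemma \ref{colimit=yoneda limit} as in the proof of that theorem; but the route through $\mathcal{J}$-cocompleteness is the shortest.
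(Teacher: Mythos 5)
Your proof is correct, and it is essentially the argument the paper has in mind: the paper offers no written proof, saying only that the proposition follows ``from the argument of Theorem~\ref{J-algebra}'', and the content of that argument relevant here is exactly what you isolate --- a $\mathbb{J}$-algebra is a space in which every bounded ideal has a colimit (the Kock--Z\"oberlein property of the monad from Proposition~\ref{saturated}), bounded ideals are precisely the ideals generated by bounded forward Cauchy nets (Lemma~\ref{for-weight}(3) together with the paper's definition of bounded net, which is stated as equivalent to boundedness of the generated ideal), and colimits of an ideal correspond to Yoneda limits of any generating net (Lemma~\ref{colimit=yoneda limit}(i)). If anything, your route is slightly cleaner than what the paper's remark literally suggests, since you bypass the formal-ball characterization (standardness plus $\mathrm{B}X$ being a local dcpo) entirely and work directly with $\mathcal{J}$-cocompleteness; the only point that needed care --- that boundedness of a net is a property of the ideal it generates, so that the net furnished by Lemma~\ref{for-weight}(3) for a bounded ideal is automatically bounded --- you identify and resolve correctly.
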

	
	In a recent paper \cite{Lu2023}, Lu and Zhao introduced the notion of local Yoneda complete quasi-metric spaces.  A quasi-metric space $(X,d)$ is  \emph{local Yoneda complete}  if it satisfies: 
	\begin{enumerate}
		\item[\rm(i)]  If $(x,r)$ is a join of a directed set $\{(x_i,r_i)\}_{i\in D}$ of $(\mathrm{B}X,\sqsubseteq)$, then $r=\inf_{i\in D} r_i$. 
		\item[\rm(ii)] $(\mathrm{B}X,\sqsubseteq)$ is a local dcpo.
	\end{enumerate}

	It is proved in \cite[Proposition 15]{Lu2023} that each local Yoneda complete quasi-metric space is standard, so,  $\mathbb{J}$-algebras  are precisely  local Yoneda complete quasi-metric spaces in the sense of  Lu and Zhao.

	Let $(X,d_X)$ and $(Y,d_Y)$ be $\mathbb{J}$-algebras. A homomorphism $f\colon(X,d_X)\longrightarrow(Y,d_Y)$ is by definition a non-expansive map that  preserves colimits of bounded ideals.  As usual, we write  $$\mathbb{J}\text{-}{\sf Alg}$$ for the category of $\mathbb{J}$-algebras and homomorphisms. 
	
	Since bounded ideals form a saturated class of weights,  for each quasi-metric space $(X,d)$ the space $(\mathcal{J}X,\rho_X)$ is a $\mathbb{J}$-algebra, and the assignment $(X,d)\mapsto (\mathcal{J}X,\rho_X)$  defines a functor $$\mathcal{J}\colon{\sf qMet}\longrightarrow \mathbb{J}\text{-}{\sf Alg}.$$  The following conclusion is an instance of a general result of Kelly \cite[Theorem 5.35]{Kelly1982}.
	
	\begin{theorem} \label{free cocompletion} {\rm(\cite{Kelly1982,KS2005})} The functor $ \mathcal{J}\colon{\sf qMet}\longrightarrow \mathbb{J}\text{-}{\sf Alg} $ is left adjoint to the forgetful functor $ U\colon \mathbb{J}\text{-}{\sf Alg}\longrightarrow {\sf qMet}$. The space $(\mathcal{J}X,\rho_X)$ is  the  free $\mathcal{J}$-cocompletion of $(X,d)$. \end{theorem}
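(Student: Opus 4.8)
The plan is to recognize the asserted adjunction as the Eilenberg--Moore adjunction of the monad $\mathbb{J}=(\mathcal{J},{\sf m},{\sf y})$ and then read off the universal property of $(\mathcal{J}X,\rho_X)$ from the fact, recorded in Section 3, that $\mathbb{J}$ is of Kock--Z\"oberlein type. First I would observe that, $\mathbb{J}$ being a submonad of the presheaf monad on $\sf qMet$, the forgetful functor out of its Eilenberg--Moore category automatically has a left adjoint sending a quasi-metric space $(X,d)$ to the free algebra $(\mathcal{J}X,{\sf m}_X)$, whose underlying space is $(\mathcal{J}X,\rho_X)$, and sending a non-expansive map $f$ to $f^\rightarrow$. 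The only identification to be made is that $\mathbb{J}\text{-}{\sf Alg}$ \emph{as defined in the paper} --- objects the $\mathbb{J}$-algebras, morphisms the non-expansive maps preserving colimits of bounded ideals --- coincides with this Eilenberg--Moore category. This is exactly what items (i) and (ii) of the displayed list of Kock--Z\"oberlein properties (stated there for a saturated class $\mathcal{T}$, hence applicable to $\mathcal{T}=\mathcal{J}$) give: a quasi-metric space carries a $\mathbb{J}$-algebra structure if and only if it is $\mathcal{J}$-cocomplete, the structure is then unique (being forced by the adjunction ${\sf m}_X\dashv{\sf y}_{\mathcal{J}X}$), and a non-expansive map between $\mathbb{J}$-algebras is a $\mathbb{J}$-homomorphism if and only if it preserves $\mathcal{J}$-colimits.

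With the two categories identified, the adjunction $\mathcal{J}\dashv U$ is the standard free/forgetful adjunction: its unit at $(X,d)$ is the Yoneda embedding ${\sf y}_X\colon(X,d)\longrightarrow(\mathcal{J}X,\rho_X)$, and its counit at a $\mathbb{J}$-algebra $(A,d_A)$ is the structure map $\alpha_A\colon(\mathcal{J}A,\rho_A)\longrightarrow(A,d_A)$, which is a $\mathbb{J}$-homomorphism. The next step is to spell out what $\alpha_A$ does: since $\mathbb{J}$ is lax-idempotent, $\alpha_A$ is left adjoint to ${\sf y}_A$, so $\alpha_A(\phi)=\colim\phi$ for every bounded ideal $\phi$ of $(A,d_A)$. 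More generally, for a non-expansive map $f\colon(X,d)\longrightarrow(A,d_A)$ into a $\mathbb{J}$-algebra, the transpose $\bar f\coloneqq\alpha_A\circ\mathcal{J}f\colon(\mathcal{J}X,\rho_X)\longrightarrow(A,d_A)$ satisfies $\bar f(\phi)=\colim_\phi f$ for each $\phi\in\mathcal{J}X$ --- this follows from the formula $\mathcal{J}f(\phi)=f^\rightarrow(\phi)=\phi\circ f^*$, the composition law $\alpha_A\circ f^\rightarrow$, and the fact that $\colim_\phi f=\colim(\phi\circ f^*)$ recorded in Section 3 --- and $\bar f\circ{\sf y}_X=f$ by the Yoneda lemma. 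Being a composite of $\mathbb{J}$-homomorphisms, $\bar f$ preserves colimits of bounded ideals.

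For uniqueness I would invoke the density of the Yoneda embedding inside $\mathcal{J}X$ built into saturation of the class $\mathcal{J}$: every $\phi\in\mathcal{J}X$ is the colimit $\colim_\phi{\sf y}_X$ of representables weighted by $\phi$ itself. Hence any colimit-preserving non-expansive $g\colon(\mathcal{J}X,\rho_X)\longrightarrow(A,d_A)$ with $g\circ{\sf y}_X=f$ must satisfy $g(\phi)=g(\colim_\phi{\sf y}_X)=\colim_\phi(g\circ{\sf y}_X)=\colim_\phi f=\bar f(\phi)$, so $g=\bar f$. This is precisely the statement that $(\mathcal{J}X,\rho_X)$ together with ${\sf y}_X$ is the free $\mathcal{J}$-cocompletion of $(X,d)$, completing the proof. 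I expect the only genuine work, beyond this bookkeeping, to be the two verifications flagged above --- that the structure map of a $\mathbb{J}$-algebra computes $\mathcal{J}$-colimits and that $\alpha_A\circ\mathcal{J}f$ realizes the pointwise rule $\phi\mapsto\colim_\phi f$ --- both of which are standard consequences of the Kock--Z\"oberlein property combined with the distributor calculus already set up, and both of which are in any case subsumed by Kelly's Theorem~5.35, which is why the result is stated as an instance of it.
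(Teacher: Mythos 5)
Your argument is correct, and it is essentially an unpacking of what the paper itself does: the paper gives no proof of this theorem, citing it as an instance of Kelly's Theorem~5.35, and your Eilenberg--Moore/Kock--Z\"oberlein route (identifying $\mathbb{J}\text{-}{\sf Alg}$ with the Eilenberg--Moore category via properties (i) and (ii) of Section~3, taking ${\sf y}_X$ as unit and $\colim$ as counit, and getting uniqueness from $\phi=\colim_\phi{\sf y}_X$) is the standard way that general result specializes here. The only nitpick is that the density identity $\colim_\phi{\sf y}_X=\phi$ follows directly from the Yoneda lemma for $\phi\in\mathcal{J}X$ rather than from saturation, which is instead what guarantees $(\mathcal{J}X,\rho_X)$ is itself a $\mathbb{J}$-algebra.
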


	Since every ideal with a colimit is  bounded, it follows that a non-expansive map preserves colimits of bounded ideals, if and only if it preserves colimits of ideals, if and only if it is Yoneda continuous. We write $${\sf qMet}^\uparrow$$ for the category of quasi-metric spaces and Yoneda continuous non-expansive maps. This category is an analogue, in the $[0,\infty]$-enriched context, of the category ${\sf Poset}^\uparrow$ of partially ordered sets and Scott continuous functions. The category $\mathbb{J}\text{-}{\sf Alg}$ of $\mathbb{J}$-algebras and homomorphisms is  a full subcategory of ${\sf qMet}^\uparrow$,  indeed a reflective one. That means, the inclusion functor $$  \mathbb{J}\text{-}{\sf Alg}\longrightarrow {\sf qMet}^\uparrow$$ has a left adjoint. This has been proved in \cite[Section 4]{Lu2023}, it is an extension of a result for local dcpos in  \cite{Mislove1999,Zhao-Fan2010} to quasi-metric spaces. Actually, there holds a general conclusion, as we see below. 
	
	Let $\mathcal{T}$ be a saturated class of weights on $\sf qMet$  and let $${\sf qMet}^{\sf t}$$ be the category of quasi-metric spaces and   non-expansive maps preserving $\mathcal{T}$-colimits.  Theorem \ref{idempotent completion} below says that  the inclusion functor  $$ \mathbb{T}\text{-}{\sf Alg}\longrightarrow {\sf qMet}^{\sf t}$$ has a left adjoint. In other words, $\mathbb{T}\text{-}{\sf Alg}$ is a reflective full subcategory of ${\sf qMet}^{\sf t}$.
	
	Let $\mathcal{T}$ be a saturated class of weights on $\sf qMet$. A weight $\phi$ of a quasi-metric space $(X,d)$ is  \emph{$\mathcal{T}$-closed} provided that for each   $\psi\in\mathcal{T} X$,    $$\rho_X(\psi,\phi)\geq \phi(\colim \psi)$$ whenever $\psi$ has a colimit. 
	
	When we view a weight $\phi$ of a quasi-metric space $(X,d)$ as a non-expansive map $$\phi\colon(X,d)\longrightarrow([0,\infty],d_R),$$  it is not hard to check that for each $\psi\in\mathcal{T}X$, the colimit of $\phi$ weighted by $\psi$ is given by $\rho_X(\psi,\phi)$, which is to say, $${\colim}_\psi\phi=\inf_{x\in X}(\phi(x)\ominus\psi(x))= \rho_X(\psi,\phi).$$ Since the underlying order of $([0,\infty],d_R)$ is the usual order relation $\leq$ of real numbers, it follows that $\phi(\colim\psi)\geq \colim_\psi\phi$. Therefore, a weight $\phi$ of a quasi-metric space $(X,d)$ is  $\mathcal{T}$-closed if and only if, as a  non-expansive map  $(X,d)\longrightarrow([0,\infty],d_R)$, it preserves $\mathcal{T}$-colimits.
	
	Let $$\sigma X$$ be  the subspace of $(\mathcal{P} X,\rho_X)$ consisting of  $\mathcal{T}$-closed weights of $(X,d)$. Then
	\begin{enumerate} \item[\rm (i)] every representable weight   $d(-,x)$ is $\mathcal{T}$-closed; 
		\item[\rm(ii)]  for each subset $\{\phi\}_{i\in J}$ of $\sigma X$, $\sup_{i\in J} \phi_i \in \sigma X$; \item[\rm(iii)] for all $\phi \in \sigma X$ and   $r\in [0,\infty]$,   $\phi\ominus r\in\sigma X$.\end{enumerate} 
	Therefore, the subset $\sigma X$ is closed under formation of limits in $(\mathcal{P} X,\rho_X)$ in the sense that for each coweight $\Psi$ of $(\sigma X,\rho_X)$, the limit of the inclusion map $\sigma X\longrightarrow \mathcal{P} X$ weighted by $\Psi$ belongs to $\sigma X$. In particular, the space $(\sigma X,\rho_X)$ is complete, hence cocomplete. 
	
	\begin{remark} In the case that $\mathcal{T}$ is the class of ideals or the class of bounded ideals, for each quasi-metric space $(X,d)$  the set $\sigma X$ also satisfies that $\phi_1,\phi_2\in\sigma X\Rightarrow \phi_1\wedge\phi_2\in\sigma X$. In these cases, $\sigma X$ is the set of lower regular functions of the Scott approach structure of $(X,d)$. Scott approach structure is an analogue of Scott topology in the $[0,\infty]$-enriched context, for related notions the reader is referred to the monograph  \cite{Lowen2015} and the articles \cite{LiZ2018b,Windels}. \end{remark}
	
	Since $\sigma X$ is closed under formation of limits in $(\mathcal{P} X,\rho_X)$, for each $\phi\in\mathcal{P} X$ there is a smallest element  $c(\phi)$ in the complete lattice $(\sigma X,\sqsubseteq)$  that is larger than  $\phi$ (i.e., $\phi\sqsubseteq c(\phi)$ in $\mathcal{P} X$). We call $c(\phi)$ the \emph{closure} of $\phi$. The map $$c\colon(\mathcal{P} X,\rho_X)\longrightarrow(\sigma X,\rho_X)$$ is left adjoint to the inclusion map $$i\colon  (\sigma X,\rho_X)\longrightarrow (\mathcal{P} X,\rho_X).$$
	
	Let $f\colon(X,d_X)\longrightarrow(Y,d_Y)$ be a non-expansive map  preserving $\mathcal{T}$-colimits. Then $f$ induces an adjunction $$\hat{f}\dashv f^{-1}\colon (\sigma Y,\rho_Y)\longrightarrow (\sigma X,\rho_X). $$ The right adjoint $f^{-1}\colon (\sigma Y,\rho_Y)\longrightarrow (\sigma X,\rho_X)$ sends each $\mathcal{T}$-closed weight $\psi$ of $(Y,d_Y)$ to the $\mathcal{T}$-closed weight $\psi\circ f_*$ of $(X,d)$; the left adjoint  $\hat{f}\colon (\sigma X,\rho_X)\longrightarrow (\sigma Y,\rho_Y)$ sends each $\mathcal{T}$-closed weight $\phi$ of $(X,d)$ to the closure of the weight $ \phi\circ f^*$ of $(Y,d_Y)$; that is, $\hat{f}(\phi)=c(\phi\circ f^*)$.
	
	\begin{lemma} \label{s is T-cocontinuous} Let $\mathcal{T}$ be a saturated class of weights on $\sf qMet$. Then for each quasi-metric space $(X,d)$, the non-expansive map $${\sf s}\colon (X,d)\longrightarrow(\sigma X,\rho_X), \quad x\mapsto d(-,x)$$ preserves $\mathcal{T}$-colimits. \end{lemma}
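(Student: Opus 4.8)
The plan is to verify directly that ${\sf s}$ satisfies the criterion recalled just before the statement: ${\sf s}$ preserves $\mathcal{T}$-colimits precisely when, for each weight $\phi\in\mathcal{T}X$ that has a colimit $b=\colim\phi$ in $(X,d)$, the element ${\sf s}(b)=d(-,b)$ of $\sigma X$ is a colimit of ${\sf s}$ weighted by $\phi$ in $(\sigma X,\rho_X)$. First I would record the harmless preliminaries: ${\sf s}$ lands in $\sigma X$ because representable weights are $\mathcal{T}$-closed (one of the three properties of $\sigma X$ listed above), and ${\sf s}$ is isometric — in particular non-expansive — because $\rho_X(d(-,x),d(-,y))=d(x,y)$ by the Yoneda lemma (Lemma \ref{yoneda lemma}).

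The main step is to unfold the defining equation of the weighted colimit and let the Yoneda lemma collapse it. By definition, ${\sf s}(b)$ is a colimit of ${\sf s}$ weighted by $\phi$ if and only if
$$\rho_X\bigl(d(-,b),\psi\bigr)=\rho_X\bigl(\phi,\ \rho_X({\sf s}(-),\psi)\bigr)\qquad\text{for all }\psi\in\sigma X.$$
Here the left-hand side equals $\psi(b)$ by the Yoneda lemma, and $\rho_X({\sf s}(x),\psi)=\rho_X(d(-,x),\psi)=\psi(x)$ for each $x\in X$, again by the Yoneda lemma, so the weight $x\mapsto\rho_X({\sf s}(x),\psi)$ is $\psi$ itself and the right-hand side is $\rho_X(\phi,\psi)$. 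Hence the entire claim reduces to proving
$$\psi(b)=\rho_X(\phi,\psi)\qquad\text{for every }\psi\in\sigma X.$$

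Finally I would read this identity off from the $\mathcal{T}$-closedness of $\psi$ applied to the weight $\phi$. Since $\phi\in\mathcal{T}X$ has colimit $b$, the defining property of a $\mathcal{T}$-closed weight gives $\rho_X(\phi,\psi)\ge\psi(\colim\phi)=\psi(b)$; and the reverse inequality $\psi(b)\ge\rho_X(\phi,\psi)$ is the automatic one noted above, since $\rho_X(\phi,\psi)=\colim_\phi\psi\le\psi(\colim\phi)$ when $\psi$ is read as a non-expansive map into $([0,\infty],d_R)$. Combining the two gives $\psi(b)=\rho_X(\phi,\psi)$, so ${\sf s}(b)$ is indeed the sought colimit. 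I do not expect a real obstacle here; the only things to watch are matching the several occurrences of $\rho_X$ (the metric of $([0,\infty]^X,\rho_X)$, its restriction to $\sigma X$, and the ``inner'' $\rho_X$ in the colimit formula, all given by one and the same truncated-supremum formula) and keeping in mind that in the definition of ``$\mathcal{T}$-closed'' only the inequality $\rho_X(\phi,\psi)\ge\psi(\colim\phi)$ is assumed, the opposite inequality being free.
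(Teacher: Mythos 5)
Your proof is correct. You verify the universal property of the weighted colimit directly in $(\sigma X,\rho_X)$: after two applications of the Yoneda lemma the colimit equation collapses to $\psi(b)=\rho_X(\phi,\psi)$ for every $\mathcal{T}$-closed $\psi$, and this identity is exactly the conjunction of the defining inequality of $\mathcal{T}$-closedness with the automatic inequality ${\colim}_\phi\psi\leq\psi(\colim\phi)$. The paper takes a more structural route: it factors ${\sf s}=c\circ{\sf y}$ through the closure operator $c\colon\mathcal{P}X\longrightarrow\sigma X$ (the left adjoint of the inclusion), notes that left adjoints preserve weighted colimits so that ${\colim}_\phi{\sf s}=c({\colim}_\phi{\sf y})=c(\phi)$, and then proves $c(\phi)=d(-,\colim\phi)$ by an antisymmetry argument that uses the same two ingredients --- $\mathcal{T}$-closedness of $c(\phi)$ for one inequality and the fact that $d(-,\colim\phi)$ is a $\mathcal{T}$-closed weight above $\phi$ for the other. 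The two arguments are mathematically equivalent; the paper's version buys a reusable identification of the colimit as the closure $c(\phi)$, while yours is self-contained, quantifies over an arbitrary $\psi\in\sigma X$ instead, and avoids invoking the adjunction between $c$ and the inclusion.
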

	
	\begin{proof} Since $\sigma X$ is closed under formation of limits in $(\mathcal{P} X,\rho_X)$, the inclusion map $\sigma X\longrightarrow\mathcal{P} X$ has a left adjoint $c\colon \mathcal{P} X\longrightarrow\sigma X$. The map $c$ sends each $\phi\in\mathcal{P} X$ to its closure, i.e., the smallest $\mathcal{T}$-closed weight  larger than $\phi$ with respect to the uderlying order $\sqsubseteq$ of $(\sigma X,\rho_X)$. Since every representable weight is  $\mathcal{T}$-closed, the map $\sf s$ is the composite of the Yoneda embedding and $c$; that is, $${\sf s}=c\circ {\sf y}\colon X\longrightarrow\mathcal{P} X \longrightarrow\sigma X.$$ 
		
		Suppose that $\phi\in\mathcal{T} X$. We wish to show that $${\sf s}(\colim\phi)={\colim}_\phi{\sf s}$$ whenever $\phi$ has a colimit. 
		Since $c$ is a left adjoint, it preserves colimits, then  $${\colim}_\phi{\sf s}= {\colim}_\phi(c\circ{\sf y})= c({\colim}_\phi{\sf y})= c(\phi),$$ so it suffices to show that $c(\phi)=d(-,\colim\phi)$. On the one hand, since $d(-,\colim\phi)$ is $\mathcal{T}$-closed, then $c(\phi)\sqsubseteq d(-,\colim\phi)$. On the other hand, since $c(\phi)$ is $\mathcal{T}$-closed and $\phi\in\mathcal{T} X$, then  $c(\phi)(\colim\phi)\leq \rho_X(\phi,c(\phi))=0$, it follows that $d(-,\colim\phi)\sqsubseteq c(\phi)$. \end{proof}
	
	Let $$\kappa X$$ be the intersection of all subsets of the space $(\sigma X,\rho_X)$ that contain all representable weights of $(X,d)$ and are closed under formation of $\mathcal{T}$-colimits. It is not hard to see that $\kappa X$ is also closed under formation of $\mathcal{T}$-colimits,   it is the smallest subset of $(\sigma X,\rho_X)$  containing all representable weights and closed under formation of $\mathcal{T}$-colimits. In particular, $(\kappa X,\rho_X)$ is a $\mathbb{T}$-algebra.
	
	\begin{lemma} \label{t is T-cocontinuous}
		Let $\mathcal{T}$ be a saturated class of weights on $\sf qMet$. Then for each quasi-metric space $(X,d)$, the non-expansive map $${\sf t}\colon (X,d)\longrightarrow(\kappa X,\rho_X), \quad x\mapsto d(-,x)$$ preserves $\mathcal{T}$-colimits. 
	\end{lemma}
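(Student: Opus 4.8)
The plan is to deduce the statement from Lemma \ref{s is T-cocontinuous} by observing that $\mathsf{t}$ is just the corestriction of $\mathsf{s}$ along the inclusion $j\colon(\kappa X,\rho_X)\hookrightarrow(\sigma X,\rho_X)$, and that this inclusion preserves $\mathcal{T}$-colimits. First I would record the factorization: since $\kappa X$ contains every representable weight, each value $\mathsf{t}(x)=d(-,x)$ lies in $\kappa X$, so $\mathsf{t}$ is well defined and $\mathsf{s}=j\circ\mathsf{t}$, with $j$ an isometric (hence injective) embedding.

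The crux is to show that $j$ preserves $\mathcal{T}$-colimits. Here I would use that $\kappa X$ is closed under formation of $\mathcal{T}$-colimits inside $\sigma X$: for every $\Psi\in\mathcal{T}(\kappa X)$, the colimit $\colim_\Psi j$, which exists in the cocomplete space $(\sigma X,\rho_X)$, already belongs to $\kappa X$; and once it lies in $\kappa X$ it satisfies the universal property of $\colim\Psi$ computed in $(\kappa X,\rho_X)$, since the defining equalities only refer to the metric $\rho_X$ that $\kappa X$ inherits. Thus $j(\colim\Psi)=\colim_\Psi j$ for all $\Psi\in\mathcal{T}(\kappa X)$, and unwinding the identity $(j\circ g)^{\rightarrow}\xi=j^{\rightarrow}(g^{\rightarrow}\xi)$ for a non-expansive $g\colon Z\to\kappa X$ and $\xi\in\mathcal{T}Z$ upgrades this to $j(\colim_\xi g)=\colim_\xi(j\circ g)$ whenever $\colim_\xi g$ exists, i.e. $j$ preserves $\mathcal{T}$-colimits.

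With this in hand the conclusion is immediate. Let $\phi\in\mathcal{T}X$ have a colimit in $X$. Since $(\kappa X,\rho_X)$ is a $\mathbb{T}$-algebra, hence $\mathcal{T}$-cocomplete, the colimit $\colim_\phi\mathsf{t}$ exists in $\kappa X$; applying $j$, then using that $j$ preserves $\mathcal{T}$-colimits, then Lemma \ref{s is T-cocontinuous} together with $\mathsf{s}=j\circ\mathsf{t}$, gives
$$j(\colim_\phi\mathsf{t})=\colim_\phi(j\circ\mathsf{t})=\colim_\phi\mathsf{s}=\mathsf{s}(\colim\phi)=d(-,\colim\phi)=j(\mathsf{t}(\colim\phi)).$$
As $j$ is injective this forces $\colim_\phi\mathsf{t}=\mathsf{t}(\colim\phi)$, which is exactly the claim. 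The main obstacle is the middle step: checking carefully that closure of $\kappa X$ under $\mathcal{T}$-colimits inside $\sigma X$ really makes the inclusion $j$ a $\mathcal{T}$-cocontinuous functor, so that colimits computed in $\kappa X$ coincide with those computed in $\sigma X$; the remainder is bookkeeping around the factorization $\mathsf{s}=j\circ\mathsf{t}$ and an appeal to Lemma \ref{s is T-cocontinuous}.
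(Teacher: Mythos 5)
Your proposal is correct and follows essentially the same route as the paper, which simply notes that the lemma ``follows immediately from the fact that ${\sf s}$ preserves $\mathcal{T}$-colimits and $\kappa X$ is closed in $(\sigma X,\rho_X)$ under formation of $\mathcal{T}$-colimits.'' You have merely made explicit the factorization ${\sf s}=j\circ{\sf t}$ and the standard verification that closure under $\mathcal{T}$-colimits makes the inclusion $j$ a $\mathcal{T}$-colimit-preserving isometric embedding, which is exactly the content the paper leaves implicit.
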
 
	
	\begin{proof} This follows immediately from  the fact that ${\sf s}\colon (X,d)\longrightarrow(\sigma X,\rho_X)$ preserves $\mathcal{T}$-colimits and  
		$\kappa X$ is closed in $(\sigma X,\rho_X)$ under formation of $\mathcal{T}$-colimits. \end{proof}
	
	\begin{theorem}\label{idempotent completion} Let $\mathcal{T}$ be a saturated class of weights on $\sf qMet$. Then the category   of $\mathbb{T}$-algebras and homomorphisms is a reflective full subcategory of the category ${\sf qMet}^{\sf t}$ of quasi-metric spaces and non-expansive maps preserving $\mathcal{T}$-colimits.  
	\end{theorem}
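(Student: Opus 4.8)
The plan is to take the reflector to be $(X,d)\mapsto(\kappa X,\rho_X)$ with unit the map $\mathsf t_X\colon(X,d)\to(\kappa X,\rho_X)$, $x\mapsto d(-,x)$. First I would dispose of the formal part: $\mathbb T\text{-}{\sf Alg}$ really is a full subcategory of ${\sf qMet}^{\sf t}$, since every $\mathbb T$-algebra is a quasi-metric space (hence an object of ${\sf qMet}^{\sf t}$), the algebra structure on a space is unique when it exists (it is the left adjoint of ${\sf y}$, by the Kock–Z\"oberlein property~(i)), and by property~(ii) a non-expansive map between $\mathbb T$-algebras is a $\mathbb T$-homomorphism exactly when it preserves $\mathcal T$-colimits. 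Since $(\kappa X,\rho_X)$ is a $\mathbb T$-algebra and $\mathsf t_X$ preserves $\mathcal T$-colimits by Lemma~\ref{t is T-cocontinuous}, it then suffices to show that for every $\mathbb T$-algebra $A$, precomposition with $\mathsf t_X$ is a bijection from the set of $\mathbb T$-homomorphisms $\kappa X\to A$ onto ${\sf qMet}^{\sf t}(X,A)$; functoriality of $\kappa$ and naturality of $\mathsf t$ are then automatic.

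For injectivity, suppose $\bar f,\bar g\colon\kappa X\to A$ are $\mathbb T$-homomorphisms with $\bar f\circ\mathsf t_X=\bar g\circ\mathsf t_X$. Then they agree on every representable weight $d(-,x)$, so the agreement set $E=\{\phi\in\kappa X:\bar f(\phi)=\bar g(\phi)\}$ contains all representables; and because $\bar f$ and $\bar g$ preserve $\mathcal T$-colimits and a $\mathcal T$-colimit of weights supported on $E$ is computed inside the $\mathbb T$-algebra $\kappa X$ (equivalently, inside $\sigma X$), $E$ is closed under formation of $\mathcal T$-colimits. By minimality of $\kappa X$ we get $E=\kappa X$, i.e. $\bar f=\bar g$.

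For surjectivity, take $f\in{\sf qMet}^{\sf t}(X,A)$. The key observation is that on weights the map $f^\leftarrow\colon\mathcal P A\to\mathcal P X$ is just precomposition, $f^\leftarrow(\psi)=\psi\circ f$; since $f$ is $\mathcal T$-cocontinuous and every $\psi\in\sigma A$, viewed as a non-expansive map $A\to([0,\infty],d_R)$, preserves $\mathcal T$-colimits, so does $\psi\circ f$, whence $f^\leftarrow$ restricts to a map $\sigma A\to\sigma X$. Combining $f^\rightarrow\dashv f^\leftarrow$ with $c_A\dashv i_A$ (and using that $i_X$ is isometric) shows that $c_A\circ f^\rightarrow\circ i_X\colon\sigma X\to\sigma A$ is left adjoint to this restriction, hence preserves all colimits, in particular $\mathcal T$-colimits. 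This map sends $d(-,x)$ to $c_A(d(-,f(x)))=d(-,f(x))\in\kappa A$, so — as $\kappa X$ is generated under $\mathcal T$-colimits by the representables and $\kappa A$ is closed under $\mathcal T$-colimits — it restricts to a $\mathbb T$-homomorphism $\kappa f\colon\kappa X\to\kappa A$ with $\kappa f\circ\mathsf t_X=\mathsf t_A\circ f$. Next I would show that when $A$ is itself a $\mathbb T$-algebra, $\mathsf t_A\colon A\to\kappa A$ is an isomorphism: it is isometric by the Yoneda lemma, and its image — the set of representable weights of $A$ — is already closed under $\mathcal T$-colimits in $\sigma A$, because $\mathsf s_A$ is $\mathcal T$-cocontinuous (Lemma~\ref{s is T-cocontinuous}) and $A$ is $\mathcal T$-cocomplete, so by minimality that image is all of $\kappa A$. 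Then $\bar f:=\mathsf t_A^{-1}\circ\kappa f\colon\kappa X\to A$ is a $\mathbb T$-homomorphism with $\bar f\circ\mathsf t_X=\mathsf t_A^{-1}\circ\mathsf t_A\circ f=f$.

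The step I expect to be the main obstacle, and would treat most carefully, is the transfer of $\mathcal T$-cocontinuity across the two nested reflections $\kappa X\subseteq\sigma X\subseteq\mathcal P X$: one must use that $\mathcal T$-colimits in $\sigma X$ (and $\kappa X$) are the closures of $\mathcal T$-colimits in $\mathcal P X$, so that left adjoints between the $\mathcal P$'s restrict correctly, and one must pinpoint that it is exactly $\mathcal T$-cocontinuity of $f$, not mere non-expansiveness, that forces $f^\leftarrow(\sigma A)\subseteq\sigma X$ — this is the only place the hypothesis ``$f$ is a morphism of ${\sf qMet}^{\sf t}$'' is consumed. Everything else is bookkeeping with the adjunctions already assembled in Lemmas~\ref{s is T-cocontinuous} and~\ref{t is T-cocontinuous}.
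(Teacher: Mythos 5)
Your proposal is correct and follows essentially the same route as the paper: the same reflector $(X,d)\mapsto(\kappa X,\rho_X)$ with unit $\mathsf{t}$, the same agreement-set argument for uniqueness, and for existence the same induced left adjoint (your $c_A\circ f^{\rightarrow}\circ i_X$ is exactly the map $\hat{f}$ the paper sets up just before Lemma \ref{s is T-cocontinuous}) combined with the identification $\kappa A\cong A$ for a $\mathbb{T}$-algebra $A$. The only difference is presentational: you rederive the adjunction $\hat{f}\dashv f^{-1}$ and factor the extension through $\kappa f\colon\kappa X\to\kappa A$ followed by $\mathsf{t}_A^{-1}$, whereas the paper restricts $\hat{f}$ directly to $\kappa X$ with codomain $\kappa Y=\{d_Y(-,y)\mid y\in Y\}$.
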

	
	\begin{proof} By Lemma \ref{t is T-cocontinuous}, for each quasi-metric space $(X,d_X)$, the non-expansive map $${\sf t}\colon (X,d_X)\longrightarrow(\kappa X,\rho_X), \quad x\mapsto d_X(-,x)$$ preserves $\mathcal{T}$-colimits, hence a morphism of ${\sf qMet}^{\sf t}$. What remains is to show that for each $\mathcal{T}$-cocomplete quasi-metric space $(Y,d_Y)$ and each morphism $f\colon (X,d_X)\rightarrow (Y,d_Y)$ of ${\sf qMet}^{\sf t}$, there exists a unique homomorphism $\overline{f}\colon (\kappa X,\rho_X)\longrightarrow(Y,d_Y)$  of  $\mathbb{T}\text{-}{\sf Alg}$ such that $f=\overline{f}\circ {\sf t}$. $$\bfig\qtriangle<650,500>[(X,d_X)`(\kappa X,\rho_X)`(Y,d_Y); {\sf t}`f`\overline{f}] \efig$$
		
		We prove the existence first. Since $(Y,d_Y)$ is $\mathcal{T}$-cocomplete and  ${\sf s}\colon (Y,d_Y)\longrightarrow(\sigma Y,\rho_Y)$ preserves $\mathcal{T}$-colimits, it follows that the set $\{d_Y(-,y)\mid y\in Y\}$ of  representable weights is closed in $(\sigma Y,\rho_Y)$ under formation of $\mathcal{T}$-colimits, hence $\kappa Y= \{d_Y(-,y)\mid y\in Y\}$,   then $(Y,d_Y)\cong (\kappa Y,\rho_Y)$. Since $\hat{f}\colon (\sigma X,\rho_X)\longrightarrow (\sigma Y,\rho_Y)$ sends $d_X(-,x)$ to $d_Y(-,f(x))$ and preserves colimits,  it follows that the set $$\{\phi\in\sigma X\mid \hat{f}(\phi)\in \kappa Y\}$$ contains all  representable weights of $(X,d)$ and is closed under formation of $\mathcal{T}$-colimits. So, $\kappa X\subseteq \{\phi\in\sigma X\mid \hat{f}(\phi)\in \kappa Y\}$; that is,  $\hat{f}(\phi)\in \kappa Y$ for all $\phi\in\kappa X$. The  restriction of $\hat{f}$ on the subspace $(\kappa X,\rho_X)$ gives the desired homomorphism $\overline{f}\colon (\kappa X,\rho_X)\longrightarrow(Y,d_Y)$.
		
		For uniqueness, suppose that both $g_1\colon (\kappa X,\rho_X)\longrightarrow(Y,d_Y)$ and $g_2\colon (\kappa X,\rho_X)\longrightarrow(Y,d_Y)$ satisfy the requirement. Then the set $$ \{\phi\in\kappa X\mid g_1(\phi)=g_2(\phi)\}$$ contains all  representable weights of $(X,d)$  and is closed under formation of $\mathcal{T}$-colimits, hence equal to $\kappa X$, it follows that $g_1=g_2$.
	\end{proof}
	
	\begin{remark} \begin{enumerate} \item[\rm (i)] Theorem \ref{free cocompletion} is a special case of Kelly \cite[Theorem 5.35]{Kelly1982}, it holds for every saturated class of weights for quantale-enriched categories. This is also true for Theorem \ref{idempotent completion}; which is to say, Theorem \ref{idempotent completion} holds for every saturated class of weights for quantale-enriched categories. 
			\item[\rm (ii)] Since every class of weights has a (unique) saturation (see e.g. \cite{AK88,KS2005}), the assumption in Theorem \ref{idempotent completion} that the class of weights is saturated is not essential, it can be dropped. It should be noted that in the classical setting, i.e., in the case of the two-element quantale, Theorem \ref{idempotent completion} was proved in Zhao \cite[Theorem 12]{Zhao2015} for subset selections that are subset hereditary.
			\item[\rm (iii)] The reflector    ${\sf qMet}^{\sf t}\longrightarrow \mathbb{T}\text{-}{\sf Alg}$  sends each $\mathbb{T}$-algebra to itself, hence idempotent, but the free $\mathcal{T}$-cocompletion (i.e., the left adjoint of the forgetful functor  $\mathbb{T}\text{-}{\sf Alg}\longrightarrow {\sf qMet}$) is not idempotent in general. Applying Theorem \ref{idempotent completion} to the saturated class of ideals yields a construction of the idempotent Yoneda completion of quasi-metric spaces obtained in Ng and Ho \cite{NgH2017} via  formal balls. Applying it to the saturated class  of bounded ideals yields a construction of the idempotent local Yoneda completion of quasi-metric spaces obtained in Lu and Zhao \cite{Lu2023} via  formal balls.  \end{enumerate} \end{remark}
	
	The following conclusion extends Theorem 14 of Zhao \cite{Zhao2015} to the context of quasi-metric spaces. 
	
	\begin{corollary} Let $\mathcal{T}$ be a saturated class of weights on $\sf qMet$. Then for each quasi-metric space $(X,d)$, the quasi-metric space of $\mathcal{T}$-closed weights of $(X,d)$ is isometric to that of $\mathcal{T}$-closed weights of $(\kappa X,\rho_X)$.   \end{corollary}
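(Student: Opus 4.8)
The plan is to show that the restriction along ${\sf t}$ is the desired isometry. Write $\rho_{\kappa X}$ for the metric of $(\sigma(\kappa X),\rho_{\kappa X})$ and consider
\[
{\sf t}^{-1}\colon(\sigma(\kappa X),\rho_{\kappa X})\longrightarrow(\sigma X,\rho_X),\qquad\psi\longmapsto\psi\circ{\sf t}_*.
\]
Since ${\sf t}\colon(X,d)\to(\kappa X,\rho_X)$ preserves $\mathcal{T}$-colimits by Lemma~\ref{t is T-cocontinuous}, this map is well defined --- it carries $\mathcal{T}$-closed weights of $(\kappa X,\rho_X)$ to $\mathcal{T}$-closed weights of $(X,d)$ --- and non-expansive, being the right adjoint of the induced adjunction $\hat{\sf t}\dashv{\sf t}^{-1}$ recalled above. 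A surjective isometry between quasi-metric spaces is an isomorphism (it is injective because the quasi-metric separates points, and its inverse is again isometric), so it suffices to prove ${\sf t}^{-1}$ isometric and surjective. Both rest on the identity ${\sf t}^{-1}(\psi)(x)=\psi(d(-,x))$ for $\psi\in\sigma(\kappa X)$, $x\in X$: by the Yoneda lemma (Lemma~\ref{yoneda lemma}) one has ${\sf t}_*(x,\beta)=\rho_X(d(-,x),\beta)=\beta(x)$, so ${\sf t}^{-1}(\psi)(x)=\inf_{\beta\in\kappa X}(\psi(\beta)+\beta(x))$; the summand $\beta=d(-,x)$ gives $\psi(d(-,x))$, and non-expansiveness of $\psi$ as a map into $([0,\infty],d_R)$ gives $\psi(d(-,x))\le\psi(\beta)+\beta(x)$ for all $\beta$, so the infimum is attained there.

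For isometry, fix $\psi_1,\psi_2\in\sigma(\kappa X)$ and put $M=\rho_X({\sf t}^{-1}\psi_1,{\sf t}^{-1}\psi_2)=\sup_{x\in X}\bigl(\psi_2(d(-,x))\ominus\psi_1(d(-,x))\bigr)$. I would then show that $S=\{\beta\in\kappa X\mid\psi_2(\beta)\le\psi_1(\beta)+M\}$ equals $\kappa X$. By the definition of $M$ it contains every representable weight; and it is closed under $\mathcal{T}$-colimits: if $\Xi\in\mathcal{T}S$ and $\beta_0$ is the colimit of the inclusion $S\hookrightarrow\kappa X$ weighted by $\Xi$, then $\psi_i(\beta_0)={\colim}_\Xi(\psi_i|_S)=\rho_S(\Xi,\psi_i|_S)$ since $\psi_1,\psi_2$ preserve $\mathcal{T}$-colimits, and the pointwise estimate $\psi_2|_S\le\psi_1|_S+M$ together with the obvious inequalities for $\rho_S(\Xi,-)$ yields $\psi_2(\beta_0)\le\psi_1(\beta_0)+M$, that is, $\beta_0\in S$. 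As $\kappa X$ is the least subset of $\sigma X$ containing the representables and closed under $\mathcal{T}$-colimits, $S=\kappa X$, so $\rho_{\kappa X}(\psi_1,\psi_2)=\sup_{\beta\in\kappa X}\bigl(\psi_2(\beta)\ominus\psi_1(\beta)\bigr)\le M$; the reverse inequality is just non-expansiveness of ${\sf t}^{-1}$.

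For surjectivity, let $\phi\in\sigma X$. As a non-expansive map $\phi\colon(X,d)\to([0,\infty],d_R)$ it preserves $\mathcal{T}$-colimits, and $([0,\infty],d_R)$ is $\mathcal{T}$-cocomplete (the colimit of any $g\colon(Z,d_Z)\to([0,\infty],d_R)$ weighted by $\xi\in\mathcal{T}Z$ being $\rho_Z(\xi,g)$), hence a $\mathbb{T}$-algebra. By the universal property in Theorem~\ref{idempotent completion}, $\phi=\overline{\phi}\circ{\sf t}$ for a homomorphism $\overline{\phi}\colon(\kappa X,\rho_X)\to([0,\infty],d_R)$; since $\overline{\phi}$ is a non-expansive map preserving $\mathcal{T}$-colimits it is a $\mathcal{T}$-closed weight of $(\kappa X,\rho_X)$, i.e. $\overline{\phi}\in\sigma(\kappa X)$, and ${\sf t}^{-1}(\overline{\phi})(x)=\overline{\phi}(d(-,x))=\overline{\phi}({\sf t}(x))=\phi(x)$, so ${\sf t}^{-1}(\overline{\phi})=\phi$. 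Thus ${\sf t}^{-1}$ is an isometric surjection, hence an isomorphism with inverse $\hat{\sf t}$.

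The non-formal content is confined to the two closure arguments: surjectivity is a direct appeal to the universal property of $\kappa X$ in Theorem~\ref{idempotent completion}, while isometry hinges on the description of $\kappa X$ as the $\mathcal{T}$-colimit closure of the representables. I expect the verification that the auxiliary set $S$ is genuinely closed under $\mathcal{T}$-colimits to be the step needing the most care.
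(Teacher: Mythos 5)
Your proof is correct, and its backbone---restriction along ${\sf t}$ as the comparison map, with surjectivity extracted from the universal property of Theorem \ref{idempotent completion} applied to the $\mathbb{T}$-algebra $([0,\infty],d_R)$---is the same as the paper's. The difference lies in how the isometry is obtained. The paper identifies the left adjoint $\hat{\sf t}$ of ${\sf t}^{-1}$ with the extension operator $\phi\mapsto\overline{\phi}$ supplied by Theorem \ref{idempotent completion}, notes that $\hat{\sf t}$ and ${\sf t}^{-1}$ are then mutually inverse bijections, and gets the isometry for free from the adjunction: $\rho_{\kappa X}(\hat{\sf t}\phi_1,\hat{\sf t}\phi_2)=\rho_X(\phi_1,{\sf t}^{-1}\hat{\sf t}\phi_2)=\rho_X(\phi_1,\phi_2)$. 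You instead prove $\rho_{\kappa X}(\psi_1,\psi_2)\leq\rho_X({\sf t}^{-1}\psi_1,{\sf t}^{-1}\psi_2)$ by hand, showing that $\{\beta\in\kappa X\mid\psi_2(\beta)\leq\psi_1(\beta)+M\}$ contains the representables and is closed under $\mathcal{T}$-colimits, hence equals $\kappa X$ by minimality. That argument is sound: the identity ${\sf t}^{-1}(\psi)(x)=\psi(d(-,x))$ follows from the Yoneda lemma exactly as you say, and the closure step works because the $\mathcal{T}$-closed weights $\psi_i$ preserve the $\mathcal{T}$-colimit $\beta_0$ so that $\psi_i(\beta_0)=\rho_S(\Xi,\psi_i|_S)$, to which the pointwise bound transfers. (Minor point: the paper's displayed formula for $\colim_\psi\phi$ has an $\inf$ where a $\sup$ is meant; your estimate goes through with either.) The trade-off is that your route never needs to identify $\hat{\sf t}$ explicitly and is more elementary, but it duplicates work---once you have surjectivity plus the uniqueness clause of Theorem \ref{idempotent completion}, the two adjoints are already mutually inverse and the isometry is a one-line consequence of the adjunction.
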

	
	\begin{proof}   The non-expansive map ${\sf t}\colon (X,d)\longrightarrow(\kappa X,\rho_X), ~ {\sf t}(x)=d(-,x)$  preserves $\mathcal{T}$-colimits by Lemma \ref{t is T-cocontinuous}, so it induces an adjunction $$\hat{{\sf t}}\dashv {\sf t}^{-1}\colon (\sigma (\kappa X),\rho_{\kappa X})\longrightarrow (\sigma X,\rho_X). $$ The right adjoint ${\sf t}^{-1}$ maps each $\psi\in\sigma(\kappa X)$ to the $\mathcal{T}$-closed weight $\psi\circ{\sf t}_*$ of $(X,d)$,  the left adjoint $\hat{{\sf t}}$ maps  each $\phi\in \sigma X$  to the unique $\mathcal{T}$-closed weight $\overline{\phi}$ of  $(\kappa X,\rho_X)$ that makes the diagram
		$$\bfig\qtriangle<650,500>[(X,d)`(\kappa X,\rho_X)`({[0,\infty]},d_R); {\sf t}`\phi`\overline{\phi}] \efig$$    commute. So   $\hat{{\sf t}}$ and $ {\sf t}^{-1}$ are bijections, hence isometries. The conclusion thus follows.
	\end{proof}
	
	\section{Continuous algebras of the bounded ideal monad}
	
	\begin{definition}
		A  $\mathbb{J}$-algebra $(X,d)$ is continuous if the left adjoint $\colim\colon (\mathcal{J}X,\rho_X)\longrightarrow (X,d)$ of  $\mathsf{y}_X\colon (X,d)\longrightarrow (\mathcal{J}X,\rho_X)$ has a left adjoint. In other words, a quasi-metric space $(X,d)$ is a continuous $\mathbb{J}$-algebra if there exist  adjunctions 
		$$\wayb \dashv\colim \dashv\mathsf{y}_X\colon (X,d)\longrightarrow (\mathcal{J}X,\rho_X).$$ 
	\end{definition}
	
	Since the monad $\mathbb{J}=(\mathcal{J},{\sf m},{\sf y})$  is of Kock-Z\"{o}berlein type,  for each quasi-metric space $(X,d)$ we have a string of adjunctions $$\mathcal{J}\mathsf{y}_X  \dashv\mathsf{m}_X\dashv\mathsf{y}_{\mathcal{J}X}\colon  (\mathcal{J}X,\rho_X)\longrightarrow (\mathcal{JJ}X,\rho_{\mathcal{J}X}),$$ so $(\mathcal{J}X,\rho_X)$ is a continuous $\mathbb{J}$-algebra. 
	
	The aim of this section is to show that a quasi-metric space is a continuous  $\mathbb{J}$-algebra if and only if it is standard and its set of formal balls is a local domain. Recall that a poset  is a \emph{local domain} \cite{Mislove1999} if it is a continuous poset and a local dcpo.\footnote{In \cite{Mislove1999} a local domain is required to have a bottom element.}

	Let $(X,d)$ be a $\mathbb{J}$-algebra. The map $$\mathfrak{w}\colon X\times X\longrightarrow [0,\infty], \quad \mathfrak{w}(x,y)=\sup_{\phi\in \mathcal{J}X} (\phi(x)\ominus  d(y,\colim \phi)) $$   is  a distributor $(X,d)\oto(X,d)$, called the $\mathcal{J}$-below distributor of $(X,d)$  \cite{Waszkiewicz2009}.   
	It is not hard to check that a $\mathbb{J}$-algebra  $(X,d)$ is continuous if and only if for all $x\in X$, the weight $\mathfrak{w}(-,x)$ belongs to $\mathcal{J}X$ and has $x$ as a colimit.  In this case,  $$\wayb x=\mathfrak{w}(-,x).$$
	Furthermore,   the $\mathcal{J}$-below distributor $\mathfrak{w}$ of a continuous $\mathbb{J}$-algebra 
	is interpolative in the sense that $\mathfrak{w}\circ \mathfrak{w}=\mathfrak{w}$. See e.g. \cite{HW2011,Waszkiewicz2009}.

	\begin{theorem}\label{local domain vs continuous alg} Let $(X,d)$ be a quasi-metric space. 
		Then the following are equivalent:  \begin{enumerate}
			\item[\rm(1)] $(X,d)$ is a continuous  $\mathbb{J}$-algebra. \item[\rm(2)] $(X,d)$ is standard and $(\mathrm{B}X,\sqsubseteq)$ is a local domain.
		\end{enumerate}   In this case, the $\mathcal{J}$-below distributor $\mathfrak{w}$ of $(X,d)$ and the way below relation $\ll$ of   $(\mathrm{B}X,\sqsubseteq)$ determine each other via $$(x,r)\ll(y,s)\quad\text{if and only if}\quad  r>s+\mathfrak{w}(x,y)$$ for all $x,y\in X$ and $r,s<\infty$.
	\end{theorem}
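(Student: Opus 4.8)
The plan is to reduce the statement to Theorem~\ref{J-algebra} and then to analyse the way below relation of $\mathrm{B}X$ through the $\mathcal{J}$-below distributor $\mathfrak{w}$. A continuous $\mathbb{J}$-algebra is in particular a $\mathbb{J}$-algebra, and a local domain is in particular a local dcpo, so by Theorem~\ref{J-algebra} we may assume throughout that $(X,d)$ is a $\mathbb{J}$-algebra; what must then be shown is that such a $(X,d)$ is continuous if and only if $(\mathrm{B}X,\sqsubseteq)$ is a continuous poset, together with the stated description of $\ll$. First I would record the dictionary between bounded ideals of $(X,d)$ and directed subsets of $\mathrm{B}X$ possessing an upper bound: by Lemma~\ref{for-weight} a weight $\phi$ with $\inf_x\phi(x)=0$ is an ideal exactly when $\mathrm{B}\phi$ is directed, and it is moreover bounded exactly when $\mathrm{B}\phi$ (or a tail of it) has an upper bound; for such $\phi$, Lemmas~\ref{join of directed set in BX} and~\ref{colimit=yoneda limit} together with standardness identify $\colim\phi$ with the first coordinate of the join of $\mathrm{B}\phi$ and with the Yoneda limit of the underlying net, while conversely every directed subset of $\mathrm{B}X$ with an upper bound generates a bounded ideal. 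Two elementary facts will be used repeatedly: for $\chi\in\mathcal{J}X$ one has $d(z,\colim\chi)\le\chi(z)$ for all $z$ (the unit of $\colim\dashv\mathsf{y}_X$), and $\chi(z)\ominus d(y,\colim\chi)\le\mathfrak{w}(z,y)$ for all $z$ (directly from the definition of $\mathfrak{w}$).

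Next I would prove the implication ``$r>s+\mathfrak{w}(x,y)\Rightarrow(x,r)\ll(y,s)$'', which is valid for every $\mathbb{J}$-algebra. Let $\{(z_i,t_i)\}_{i\in D}$ be a directed set of $\mathrm{B}X$ whose join $(b,t)$ satisfies $(y,s)\sqsubseteq(b,t)$; by standardness $b$ is the Yoneda limit of $\{z_i\}$, $t=\inf_i t_i$, and $d(y,b)\le s-t$. Writing $\psi$ for the ideal generated by $\{z_i\}$ (so $\colim\psi=b$), we get $\mathfrak{w}(x,y)\ge\psi(x)\ominus d(y,b)\ge\psi(x)\ominus(s-t)$, hence $\psi(x)<r-t$; unravelling $\psi(x)=\inf_i\sup_{j\ge i}d(x,z_j)$ and using that the net $\{t_i\}$ decreases to $t$ produces an index $i_1$ with $d(x,z_{i_1})+t_{i_1}<r$, i.e. $(x,r)\sqsubseteq(z_{i_1},t_{i_1})$. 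So $(x,r)\ll(y,s)$.

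For $(1)\Rightarrow(2)$, assume $(X,d)$ is continuous, so for every $y$ the weight $\mathfrak{w}(-,y)$ is a bounded ideal with $\colim\mathfrak{w}(-,y)=y$. Then $\{(z,s+\rho):\mathfrak{w}(z,y)<\rho\}$ is $\mathrm{B}\mathfrak{w}(-,y)$ shifted by $s$; it is directed and, by Lemmas~\ref{colimit=yoneda limit} and~\ref{join of directed set in BX}, has join $(y,s)$. If $(x,r)\ll(y,s)$ then some member $(z_0,s+\rho_0)$ of this set lies above $(x,r)$ with $\mathfrak{w}(z_0,y)<\rho_0$, whence by the distributor inequality for $\mathfrak{w}$, $\mathfrak{w}(x,y)\le\mathfrak{w}(z_0,y)+d(x,z_0)<\rho_0+(r-s-\rho_0)=r-s$. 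Combined with the previous paragraph, this yields the asserted description of $\ll$, and shows that $\{(z,t):(z,t)\ll(y,s)\}$ equals $\{(z,s+\rho):\mathfrak{w}(z,y)<\rho\}$, which is directed with join $(y,s)$; hence $\mathrm{B}X$ is a continuous poset, and being also a local dcpo it is a local domain.

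For $(2)\Rightarrow(1)$, assume $(\mathrm{B}X,\sqsubseteq)$ is a local domain, and fix $y$. Since $\mathrm{B}X$ is continuous, $\{(z,t):(z,t)\ll(y,0)\}$ is directed with join $(y,0)$; by standardness its radii tend to $0$ and its underlying net is forward Cauchy with Yoneda limit $y$, so it generates a bounded ideal $\xi$ with $\colim\xi=y$. I would then show $\xi=\mathfrak{w}(-,y)$, which finishes the proof since then $\mathfrak{w}(-,y)\in\mathcal{J}X$ has colimit $y$, and the description of $\ll$ follows from the implications already proved. The inequality $\xi(z)\le\mathfrak{w}(z,y)$ is immediate from the definition of $\mathfrak{w}$ (take $\chi=\xi$); the reverse, $\mathfrak{w}(z,y)\le\xi(z)$ — equivalently $\chi(z)\ominus d(y,\colim\chi)\le\xi(z)$ for every $\chi\in\mathcal{J}X$ — is the crux. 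When $d(y,\colim\chi)=0$ it follows by shifting $\mathrm{B}\chi$ and running the argument of the second paragraph in reverse; the delicate case is $d(y,\colim\chi)>0$, where $(y,0)$ lies above no shift of $\mathrm{B}\chi$. Here I would use interpolation in the continuous poset $\mathrm{B}X$, together with the observation that $(z,0)\ll(y,0)$ never holds in $\mathrm{B}X$ of any quasi-metric space — test against the chain $\{(y,\varepsilon):\varepsilon>0\}$, whose join is $(y,0)$ by Lemma~\ref{join of directed set in BX} — so that every interpolant has positive radius, and the relations $d(z,\colim\chi)\le\chi(z)$ and $\xi\ge d(-,y)$, to force $\chi(z)\le d(y,\colim\chi)+\xi(z)$. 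This last inequality, in the case $d(y,\colim\chi)>0$, is exactly where the main difficulty lies: one must show that the ad hoc approximating ideal $\xi$, built purely order-theoretically from the way below set of $(y,0)$, dominates the intrinsic $\mathcal{J}$-below distributor, and this demands a careful combination of interpolation in $\mathrm{B}X$ with the componentwise behaviour of joins secured by standardness; every other step is routine once the dictionary of the first paragraph is in place.
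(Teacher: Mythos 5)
Your reduction to Theorem~\ref{J-algebra}, your proof that $r>s+\mathfrak{w}(x,y)$ implies $(x,r)\ll(y,s)$ in any $\mathbb{J}$-algebra, and your argument for $(1)\Rightarrow(2)$ are correct, and they actually supply details that the paper only asserts and delegates to \cite[Theorem 17.16]{Zhang2024}. The structural difference in $(2)\Rightarrow(1)$ is cosmetic: you aim to prove $\xi=\mathfrak{w}(-,y)$ for the ideal $\xi$ generated by the way-below set of $(y,0)$, whereas the paper shows that the assignment $y\mapsto k(y)$ (its name for your $\xi$) is left adjoint to $\colim$; unwinding the adjunction identity $\rho_X(k(y),\chi)=d(y,\colim\chi)$ gives exactly your inequality $\chi(z)\leq \xi(z)+d(y,\colim\chi)$, so the two formulations meet at the same point.

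That point is precisely where your proposal stops being a proof. The inequality $\mathfrak{w}(z,y)\leq\xi(z)$ in the case $d(y,\colim\chi)>0$ --- equivalently, the assertion that $(z,\rho)\ll(y,0)$ implies $(z,\rho+t)\ll(y,t)$ for every finite $t$, which is what would let you test against a shift of $\mathrm{B}\chi$ when $(y,0)$ lies below no formal ball whose first component is $\colim\chi$ --- is the technical heart of the whole direction, and you do not prove it: you list ingredients (interpolation, the observation that $(z,0)\ll(y,0)$ is impossible, componentwise computation of joins) and then declare that this ``is exactly where the main difficulty lies''. Be aware that the shift in the \emph{other} direction, from $(z,\rho+t)\ll(y,t)$ down to $(z,\rho)\ll(y,0)$, is routine (inflate the test family by $t$, which standardness permits), but the direction you need requires deflating a test family $\{(w_k,s_k)\}$ whose radii may drop below $t$, and no combination of the facts you cite obviously closes this case; the paper isolates exactly this assertion and refers to \cite[Theorem 17.16]{Zhang2024} for its proof. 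So as written, $(2)\Rightarrow(1)$ --- and with it the identification of $\ll$ with $\mathfrak{w}$ in that direction --- remains incomplete.
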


	\begin{proof} $(1)\Rightarrow(2)$ It suffices to show that $(\mathrm{B}X,\sqsubseteq)$ is a continuous poset. For each $y\in X$, let $$d(y)=\{(x,r)\in \mathrm{B}X\mid  \mathfrak{w}(x,y)=\wayb y(x)<r \}.$$ 
		Since $\wayb y$ is an ideal with colimit $y$, it follows that $d(y)$ is a directed set of $\mathrm{B}X$ with $(y,0)$ as a join.  We assert that  $(x,r)\ll (y,0)$ for all $(x,r)\in d(y)$,  and  that   $(x,r)\ll (y,s)$ whenever $r>\mathfrak{w}(x,y)+s$, in particular, $(x,s+r)\ll (y,s)$ for all $(x,r)\in d(y)$.
		From these assertions one easily deduces that $(\mathrm{B}X,\sqsubseteq)$ is a continuous poset. The proof of the  assertions is similar to those in \cite[Theorem 17.16]{Zhang2024}, so we omit it here.

		$(2)\Rightarrow(1)$ For each $y\in X$, let 
		$$D=\{(x,r)\in \mathrm{B}X\mid (x,r)\ll (y,0)\}.$$ 
		Then $D$ is a directed set of $(\mathrm{B}X,\sqsubseteq)$ with $(y,0)$ as a join. Index $D$ by itself as $D=\{(x_i,r_i)\}_{i\in D}$, which is to say, $(x_i,r_i)\sqsubseteq(x_j,r_j)$ if and only if $i\leq j$. Since $(X,d)$ is standard, it follows that $y$ is a Yoneda limit of the forward Cauchy net $\{x_i\}_{i\in D}$, hence  a colimit of the ideal $$k(y)=\inf_{i\in D} \sup_{j\geq i} d(-,x_j).$$  We assert that   $k(y)(x)<r$ implies $(x,r)\ll (y,0)$, and the latter implies $(x,r+t)\ll (y,t)$ for all $t<\infty$.   With help of this assertion one readily verifies that the assignment $y\mapsto k(y)$ defines a left adjoint of $\colim \colon(\mathcal{J}X,\rho_X)\longrightarrow (X,d)$, hence $(X,d)$ is a continuous $\mathbb{J}$-algebra. Verification of the  assertion  is left to the reader (see \cite[Theorem 17.16]{Zhang2024}).
	\end{proof}	 
	
	\section*{Acknowledgement}
	This paper is completed during the first author's  visit to Sichuan University, September 2023 to August 2024. The first author  acknowledges the support of the National Natural Science Foundation of China (No. 12001473), the Shandong Provincial Natural Science Foundation of China (ZR2023MA059) and the Visiting Research Funds of Shandong Provincial Universities. The second author  acknowledges the support of the National Natural Science Foundation of China (No. 12371463).

\end{document}